\newtheorem*{theorema}{Theorem A}
\newtheorem*{theoremb}{Theorem B}
\newtheorem{thm}{Theorem}
\newtheorem{lem}[thm]{Lemma}
\newtheorem{prop}[thm]{Proposition}
\newtheorem{cor}[thm]{Corollary}
\newtheorem{conj}[thm]{Conjecture}
\newcommand{\C}{{\mathbb C}}
\newcommand{\cn}{{\C^n}}
\newcommand{\bn}{{\mathbb B^n}}
\newcommand{\B}{{\mathcal B}}
\newcommand{\dla}{d\lambda_\alpha}
\newcommand{\inc}{\int_\cn}
\newcommand{\intc}{\int_{\C}}
\DeclareMathOperator{\esssup}{ess\,sup}
\DeclareMathOperator{\re}{Re\,}
\begin{document}
\title{New characterizations for Fock spaces}

\author{Guanlong Bao}
\address[Guanlong Bao]{Department of Mathematics, Shantou University, Shantou, Guangdong 515821, China}
\email{glbao@stu.edu.cn}

\author{Pan Ma}
\address[Pan Ma]{School of Mathematics and Statistics, HNP-LAMA, Central South University,
Changsha, Hunan 410083, China}
\email{pan.ma@csu.edu.cn}

\author{Kehe Zhu}
\address[Kehe Zhu]{Department of Mathematics and Statistics, SUNY,
Albany, NY 12222, USA}
\email{kzhu@albany.edu}

\subjclass[2020]{30H20, 46E15, 46E22.}

\keywords{Fock spaces, Gaussian measure, induced distance, Lipschitz space,
Hardy-Littlewood type theorem.}

\thanks{Ma is supported by NNSF of China (Grant numbers 11801572 and 12171484),
the Natural Science Foundation of Hunan Province (Grant number 2023JJ20056),
the Science and Technology Innovation Program of Hunan Province (Grant number
2023RC3028), and Central South University Innovation-Driven Research
Programme (Grant number 2023CXQD032).
Bao is partially supported by NNSF of China (Grant number 12271328).}
\thanks{Pan Ma is the corresponding author.}

\begin{abstract}
We show that the maximal Fock space $F^\infty_\alpha$ on $\cn$ is a
Lipschitz space, that is, there exists a distance $d_\alpha$ on $\cn$ such that an entire
function $f$ on $\cn$ belongs to $F^\infty_\alpha$ if and only if
$$|f(z)-f(w)|\le Cd_\alpha(z,w)$$
for some constant $C$ and all $z,w\in\cn$. This can be considered the Fock space
version of the following classical result in complex analysis: a holomorphic function
$f$ on the unit ball $\bn$ in $\cn$ belongs to the Bloch space if and only if there exists
a positive constant $C$ such that $|f(z)-f(w)|\le C\beta(z,w)$ for all $z,w\in\bn$,
where $\beta(z,w)$ is the distance on $\bn$ in the Bergman metric. We also present
a new approach to Hardy-Littlewood type characterizations for $F^p_\alpha$.
\end{abstract}

\maketitle

\section{Introduction}

For $\alpha>0$ and $0<p\le\infty$ we use $L^p_\alpha$ or $L^p_\alpha(\cn)$ to denote
the space of all Lebesgue measurable functions $f$ on the complex Euclidean space $\cn$
such that the function $f(z)e^{-\alpha|z|^2/2}$ belongs to $L^p(\cn, dv)$, where $dv$ is
ordinary volume measure on $\cn$. For $f\in L^p_\alpha$ we write
$$\|f\|^p_{p,\alpha}=\left(\frac{p\alpha}{2\pi}\right)^n
\inc\left|f(z)e^{-\alpha|z|^2/2}\right|^p\,dv(z)$$
when $0<p<\infty$ and
$$\|f\|_{\infty,\alpha}=\esssup\left\{|f(z)|e^{-\alpha|z|^2/2}: z\in\cn\right\}$$
when $p=\infty$.

Let $H(\cn)$ denote the space of all entire functions on $\cn$. The spaces
$$F^p_\alpha=L^p_\alpha\cap H(\cn),\qquad 0<p\le\infty, \alpha>0,$$
are usually called Fock spaces. Each $F^p_\alpha$ is closed in the Lebesgue space
$L^p_\alpha$. In particular, $F^p_\alpha$ is a Banach space when $1\le p\le\infty$.

It is clear that $L^2_\alpha=L^2(\cn, \dla)$, where
$$\dla(z)=\left(\frac\alpha\pi\right)^n e^{-\alpha|z|^2}\,dv(z)$$
is the Gaussian measure. The orthogonal projection
$P_\alpha: L^2(\cn, \dla)\to F^2_\alpha$ is an integral operator, namely,
\begin{equation}
P_\alpha f(z)=\inc e^{\alpha z\overline w}f(w)\,\dla(w),
\label{eq1}
\end{equation}
where $z\overline w=z_1\overline w_1+\cdots+z_n\overline w_n$. It is well known
that $F^p_\alpha=P_\alpha L^p_\alpha$ for all $1\le p\le\infty$.

The function $K_w(z)=K(z,w)=e^{\alpha z\overline w}$ is the reproducing kernel of
the Hilbert space $F^2_\alpha$. We will need to use the normalized reproducing kernels
$k_w=K_w/\|K_w\|_{2,\alpha}$, which are unit vectors in $F^2_\alpha$. It is clear that
$$k_w(z)=\frac{K(z,w)}{\sqrt{K(w,w)}}=e^{\alpha z\overline w-(\alpha|w|^2/2)}.$$
See \cite{Zhu2} for an introduction to Fock spaces.

The main result of the paper is Theorem A below.

\begin{theorema}
Suppose $\alpha>0$, $f\in H(\cn)$, and $d$ is the distance function on $\cn$ defined by
$$d(z,w)=\inc\left|e^{\alpha z\overline u}-e^{\alpha w\overline u}\right|
\,d\lambda_{\alpha/2}(u).$$
Then the following conditions are equivalent.
\begin{enumerate}
\item[(a)] $f\in F^\infty_\alpha$.
\item[(b)] There exists a positive constant $C$ such that
$$|f(z)-f(w)|\le Cd(z,w),\qquad z,w\in\cn.$$
\item[(c)] The function $Rf(z)/(1+|z|^2)$ belongs to $L^\infty_\alpha$, where
$$Rf(z)=z_1\partial_1f(z)+\cdots+z_n\partial_nf(z)$$
is the radial derivative of $f$ with $\partial_kf=\partial f/\partial z_k$ for $1\le k\le n$.
\end{enumerate}
\end{theorema}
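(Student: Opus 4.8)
My plan is to establish the two equivalences $(a)\Leftrightarrow(b)$ and $(a)\Leftrightarrow(c)$, the latter being a Hardy--Littlewood type characterization of $F^\infty_\alpha$. For $(a)\Rightarrow(b)$ I would use that $F^\infty_\alpha=P_\alpha L^\infty_\alpha$: writing $f=P_\alpha\phi$ with $\phi\in L^\infty_\alpha$ gives $f(z)-f(w)=\inc(e^{\alpha z\overline u}-e^{\alpha w\overline u})\phi(u)\,\dla(u)$, and since $|\phi(u)|\le\|\phi\|_{\infty,\alpha}e^{\alpha|u|^2/2}$ while a one-line comparison of the two Gaussian densities gives $e^{\alpha|u|^2/2}\,\dla(u)=2^n\,d\lambda_{\alpha/2}(u)$, one obtains $|f(z)-f(w)|\le 2^n\|\phi\|_{\infty,\alpha}\,d(z,w)$. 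For the converse $(b)\Rightarrow(a)$, set $w=0$ in (b), so $|f(z)|\le|f(0)|+C\,d(z,0)$, and estimate $d(z,0)\le\inc(e^{\alpha\re(z\overline u)}+1)\,d\lambda_{\alpha/2}(u)=e^{\alpha|z|^2/2}+1\le 2e^{\alpha|z|^2/2}$, using the standard Gaussian integral $\inc e^{\re(a\overline u)}\,d\lambda_\beta(u)=e^{|a|^2/(4\beta)}$ with $a=\alpha z$ and $\beta=\alpha/2$; hence $\|f\|_{\infty,\alpha}\le|f(0)|+2C<\infty$.

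The implication $(a)\Rightarrow(c)$ is a routine Cauchy estimate. Applying Cauchy's integral formula in each variable $z_k$ on a circle of radius $r=1/(1+|z|)$, together with $|f(w)|\le\|f\|_{\infty,\alpha}e^{\alpha|w|^2/2}$ and the elementary bound $|w|^2\le|z|^2+3$ valid on that circle, gives $|\partial_kf(z)|\lesssim(1+|z|)\,\|f\|_{\infty,\alpha}\,e^{\alpha|z|^2/2}$, and therefore $|Rf(z)|\le|z|\sum_k|\partial_kf(z)|\lesssim(1+|z|^2)\,\|f\|_{\infty,\alpha}\,e^{\alpha|z|^2/2}$, i.e.\ $Rf(z)/(1+|z|^2)\in L^\infty_\alpha$.

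The implication $(c)\Rightarrow(a)$ is where I expect the real work. From $\frac{d}{dt}f(tz)=Rf(tz)/t$ for $t>0$ one has $f(z)=f(0)+\int_0^1 t^{-1}Rf(tz)\,dt$, the integrand extending continuously to $t=0$ because $Rf(0)=0$. The growth estimate $|Rf(\zeta)|\le C(1+|\zeta|^2)e^{\alpha|\zeta|^2/2}$ supplied by (c) is not integrable against $dt/t$ near $t=0$, so I would split the integral at $t=1/|z|$ (assuming $|z|\ge1$; for $|z|\le1$ boundedness is immediate from continuity). On $[1/|z|,1]$, where $t^2|z|^2\ge1$, use $1+t^2|z|^2\le 2t^2|z|^2$ and the substitution $v=\alpha t^2|z|^2/2$ to bound that contribution by $\tfrac{2C}{\alpha}e^{\alpha|z|^2/2}$. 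On $[0,1/|z|]$, use instead that $Rf$ is entire, vanishes at the origin, and is bounded by $M:=2Ce^{\alpha/2}$ on the closed unit ball; the Schwarz lemma applied along each complex line through $0$ then yields $|Rf(\zeta)|\le M|\zeta|$ for $|\zeta|\le1$, so $t^{-1}|Rf(tz)|\le M|z|$ and this part contributes at most $M$. Combining, $|f(z)|\le|f(0)|+M+\tfrac{2C}{\alpha}e^{\alpha|z|^2/2}$, so $f\in F^\infty_\alpha$.

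Thus the only genuine obstacle is this last splitting: the estimate furnished by (c) degenerates at the origin, and the remedy is the Schwarz-type bound $|Rf(\zeta)|\lesssim|\zeta|$ for $|\zeta|$ small, which exploits $Rf(0)=0$. Everything else reduces to the two Gaussian identities above, completing the square, and Cauchy estimates. I would organize the write-up so that $(a)\Leftrightarrow(c)$ appears as a separate proposition, since it is precisely the Hardy--Littlewood type result advertised in the abstract and is the part for which a new approach is being offered.
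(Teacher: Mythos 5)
Your proof is correct, and its overall architecture matches the paper's (Theorems 6 and 7 there): (b)$\Rightarrow$(a) by setting $w=0$ and bounding $d(z,0)\lesssim e^{\alpha|z|^2/2}$, and (c)$\Rightarrow$(a) by writing $f(z)-f(0)=\int_0^1 Rf(tz)\,dt/t$ and splitting the integral to tame the singularity at $t=0$. Two of your ingredients, however, are genuinely different. For (a)$\Rightarrow$(c) the paper first establishes the Lipschitz bound (b), takes difference quotients to get $|\partial_kf(z)|\le C\alpha\inc|u_ke^{\alpha z\overline u}|\,d\lambda_{\alpha/2}(u)$, and then invokes a separately proved asymptotic $\inc|u|\,|e^{\alpha z\overline u}|\,d\lambda_{\alpha/2}(u)\sim(1+|z|)e^{\alpha|z|^2/2}$ (its Lemma 5, obtained from a differentiated reproducing identity plus H\"older); your Cauchy estimate on a polydisc of radius $1/(1+|z|)$ reaches the same gradient bound with no auxiliary lemma, making this implication independent of the distance-function machinery. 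For (c)$\Rightarrow$(a) the paper splits at the fixed point $t=1/2$ and handles the inner piece by first replacing the hypothesis with the equivalent form $|Rf(z)|\le C|z|(1+|z|)e^{\alpha|z|^2/2}$, which tacitly uses $Rf(0)=0$; you split at $t=1/|z|$ and make that use explicit via the Schwarz lemma along complex lines through the origin. Both are valid; yours is the more self-contained and elementary route, while the paper's detour through $d_\alpha$ and Lemma 5 is machinery it needs anyway for its other results (Lemma 3 and the discussion around Conjecture 14). Your (a)$\Leftrightarrow$(b) argument is essentially the paper's, which applies the reproducing formula directly to $f$ and uses its Proposition 2 for the growth of $d(z,0)$.
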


This result has a well-known analogue in the more classical theory of Bergman spaces.
Recall that the Bergman space $A^p$, $0<p<\infty$, of the open unit ball $\bn$ in
$\cn$ is the space of all holomorphic functions in $L^p(\bn,dv)$. If $P: L^2(\bn, dv)
\to A^2$ is the Bergman projection, then it is well known that $A^p=PL^p(\bn,dv)$
for $1<p<\infty$. When $p=\infty$, the space $\B=PL^\infty(\bn)$ is called the
Bloch space of $\bn$, which can be shown to consist of all holomorphic $f$ on $\bn$
such that
$$\sup_{z\in \bn}(1-|z|^2)|Rf(z)|<\infty.$$
The Bergman space analogue of Theorem A is the following: a holomorphic function
$f$ on $\bn$ belongs to $\B$ if and only if $|f(z)-f(w)|\le C\beta(z,w)$ for some positive
constant $C$ and all $z,w\in\bn$, where $\beta(z,w)$ is the distance function on $\bn$
in the Bergman metric. See \cite{Zhu1}.

The Bergman spaces $A^p$ and the Bloch space $\B$ on $\bn$ can also be described
in terms of higher order derivatives. More precisely, if $f$ is holomorphic on $\bn$ and
$N$ is a positive integer, then $f\in A^p$ if and only if the functions
$(1-|z|)^{|m|}\partial^m f(z)$ belong to $L^p(\bn,dv)$ for all $|m|=N$. Here
$m=(m_1,\cdots,m_n)$ is an $n$-tuple of non-negative integers and
$$|m|=m_1+\cdots+m_n,\qquad\partial^mf=\frac{\partial^{|m|}f}{\partial z_1^{m_1}\cdots\partial z_n^{m_n}}.$$
Similarly, $f\in\B$ if and only if the functions $(1-|z|)^{|m|}\partial^mf(z)$ belong to
$L^\infty(\bn,dv)$ for all $|m|=N$. Such results are usually called
Hardy-Littlewood theorems, especially in the one-dimensional case of the unit disc.
See \cite{Zhu1} again.

It turns out that these Hardy-Littlewood type theorems also hold for Fock spaces. The
following theorem can be found in \cite{CFB, CH, CP, CN, CP1}.

\begin{theoremb}
Suppose $\alpha>0$, $0<p\le\infty$, $N$ is positive integer, and $f\in H(\cn)$.
Then $f\in F^p_\alpha$ if and only if the functions $\partial^mf(z)/(1+|z|)^{|m|}$,
$|m|=N$, all belong to $L^p_\alpha$.
\end{theoremb}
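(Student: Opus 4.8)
The plan is as follows. I would dispose of the necessity (``only if'') first; for $1\le p\le\infty$ it is short. Since $F^p_\alpha=P_\alpha L^p_\alpha$, every $f\in F^p_\alpha$ satisfies $f=P_\alpha f$, and differentiating \eqref{eq1} under the integral sign gives, for $|m|=N$,
$$\partial^mf(z)=\alpha^{|m|}\inc f(w)\,\overline{w}^{\,m}\,e^{\alpha z\overline w}\,\dla(w).$$
Using the identity $|e^{\alpha z\overline w}|\,e^{-\alpha|z|^2/2}\,e^{-\alpha|w|^2/2}=e^{-\alpha|z-w|^2/2}$ together with $|\overline{w}^{\,m}|\le|w|^{|m|}\le(1+|z|)^{|m|}(1+|z-w|)^{|m|}$, this yields
$$\frac{|\partial^mf(z)|\,e^{-\alpha|z|^2/2}}{(1+|z|)^{|m|}}\le C_m\inc\Big(|f(w)|\,e^{-\alpha|w|^2/2}\Big)(1+|z-w|)^{|m|}e^{-\alpha|z-w|^2/2}\,dv(w),$$
which is a constant times the convolution of $|f(\cdot)|e^{-\alpha|\cdot|^2/2}\in L^p(\cn,dv)$ with the $L^1(\cn,dv)$-function $(1+|\cdot|)^{|m|}e^{-\alpha|\cdot|^2/2}$; Young's inequality then gives $\partial^mf/(1+|z|)^{|m|}\in L^p_\alpha$. (For $0<p<1$ one argues instead via the pointwise estimates and the lattice decomposition of $F^p_\alpha$.)

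For the sufficiency (``if'') the plan is to isolate a single first-order statement and induct on $N$. The key lemma will be: \emph{if $g\in H(\cn)$, $s\ge0$, and $\partial_jg/(1+|z|)^{s+1}\in L^p_\alpha$ for $1\le j\le n$, then $g/(1+|z|)^s\in L^p_\alpha$}. Granting it, suppose $\partial^mf/(1+|z|)^N\in L^p_\alpha$ for all $|m|=N$. Applying the lemma with $s=N-1$ to each $g=\partial^{m'}f$ with $|m'|=N-1$ (so that $\partial_jg=\partial^{m'+e_j}f$ and $|m'+e_j|=N$) gives $\partial^{m'}f/(1+|z|)^{N-1}\in L^p_\alpha$ for all such $m'$; repeating with $s=N-2,\dots,0$ descends to $f\in L^p_\alpha$, hence (as $f$ is entire) $f\in F^p_\alpha$.

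To prove the key lemma I would write $g(z)-g(0)=\int_0^1\sum_jz_j(\partial_jg)(tz)\,dt$, substitute $u=t|z|$, and set $\xi=z/|z|$, obtaining
$$\frac{|g(z)-g(0)|\,e^{-\alpha|z|^2/2}}{(1+|z|)^s}\le\int_0^{|z|}\frac{(1+u)^{s+1}}{(1+|z|)^s}\,\phi(u\xi)\,e^{-\alpha(|z|^2-u^2)/2}\,du,$$
where $\phi(w)=\big(\sum_j|\partial_jg(w)|\big)e^{-\alpha|w|^2/2}/(1+|w|)^{s+1}$ lies in $L^p(\cn,dv)$ by hypothesis. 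Split the $u$-integral at $1$. On $0\le u\le\min(1,|z|)$, using that $\sum_j|\partial_jg|$ is bounded on $\overline{\bn}$, that part of the bound is at most a fixed multiple of $e^{-\alpha|z|^2/2}$ and so belongs to every $L^p(\cn,dv)$. On $1\le u\le|z|$ we have $(1+u)^{s+1}\le2^{s+1}u(1+|z|)^s$, so it remains to check that $z\mapsto\int_1^{|z|}u\,\phi(u\xi)\,e^{-\alpha(|z|^2-u^2)/2}\,du$ lies in $L^p(\cn,dv)$; writing $z=\rho\xi$ in polar coordinates and integrating over $\xi\in\mathbb S^{2n-1}$, this reduces to the one-variable weighted inequality
$$\int_1^\infty\Big(\int_1^\rho b(u)\,e^{-\alpha(\rho^2-u^2)/2}\,u\,du\Big)^p\rho^{2n-1}\,d\rho\le C\int_1^\infty b(u)^p\,u^{2n-1}\,du\qquad(b\ge0).$$

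This one-variable inequality is the crux, and I expect it to be the main obstacle. The substitution $x=\rho^2$, $y=u^2$ converts it to the boundedness of $b_1\mapsto b_1*\big(\mathbf 1_{(0,\infty)}e^{-\alpha\,\cdot/2}\big)$ on $L^p\big((1,\infty),x^{n-1}dx\big)$, where $b_1(y)=b(\sqrt y)\mathbf 1_{(1,\infty)}(y)$. For $1\le p\le\infty$, Minkowski's integral inequality bounds the norm by $\int_0^\infty e^{-\alpha t/2}\|b_1(\cdot-t)\|_{L^p((1,\infty),\,x^{n-1}dx)}\,dt$, and since $y\ge1$ forces $y+t\le y(1+t)$ one gets $\|b_1(\cdot-t)\|_{L^p((1,\infty),x^{n-1}dx)}\le(1+t)^{(n-1)/p}\|b_1\|_{L^p((1,\infty),x^{n-1}dx)}$, so the $t$-integral converges and the estimate closes. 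What makes this delicate is that the Gaussian factor $e^{-\alpha(\rho^2-u^2)/2}$ must be kept, not replaced by its supremum in $\rho$: otherwise the polynomial weight $\rho^{2n-1}$ destroys convergence, and it is precisely for this reason that one first peels off the region near the origin (handled crudely via continuity of the derivatives) so as to work where $\rho,u\ge1$ and the weight is benign. For $0<p<1$, Young's and Minkowski's inequalities are no longer available and must be replaced by their $p$-subadditive/atomic-decomposition counterparts; I expect this to be the only substantial additional bookkeeping.
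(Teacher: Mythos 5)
Your route is genuinely different from the paper's. The paper does not actually prove Theorem~B; it cites the literature, and the ``new approach'' it offers (Section~4) is one-dimensional only and leans on the external theorem of Cho--Zhu that $f'\in F^p_\alpha$ iff $zf(z)\in F^p_\alpha$. You instead give a direct, all-dimensional argument: necessity via differentiating the reproducing formula, the identity $|e^{\alpha z\overline w}|e^{-\alpha|z|^2/2}e^{-\alpha|w|^2/2}=e^{-\alpha|z-w|^2/2}$, and Young's inequality; sufficiency via a descent lemma proved by integrating along rays and reducing, in polar coordinates, to a one-variable weighted convolution inequality. For $1\le p\le\infty$ this is correct and complete modulo routine details, and it is more self-contained than what the paper offers: it buys a uniform proof in all dimensions without invoking Fock--Sobolev machinery. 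The splitting of the ray integral at $u=1$ so as to keep the Gaussian factor $e^{-\alpha(\rho^2-u^2)/2}$ intact is exactly the right care, and the Minkowski estimate $\|b_1(\cdot-t)\|_{L^p((1,\infty),x^{n-1}dx)}\le(1+t)^{(n-1)/p}\|b_1\|$ closes the loop.

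The genuine gap is the range $0<p<1$, and it is more than ``bookkeeping.'' Your one-variable inequality is \emph{false} for general $b\ge0$ when $0<p<1$: take $b_1=\varepsilon^{-1}\sum_{k=1}^M\mathbf 1_{[k,k+\varepsilon]}$, so that $\|b_1\|_{L^p}^p\asymp M\varepsilon^{1-p}\to0$ as $\varepsilon\to0$ while $(b_1*e^{-\alpha\cdot/2}\mathbf 1_{(0,\infty)})(x)\gtrsim1$ on a set of measure $\gtrsim M$, so the left-hand side stays bounded below. Hence no argument that treats $b(u)=\phi(u\xi)$ as an arbitrary nonnegative function can succeed; you must use that $\phi$ comes from an entire function, via the sub-mean-value property $|\partial^m f(w)|^pe^{-p\alpha|w|^2/2}\le C\int_{B(w,1)}|\partial^mf|^pe^{-p\alpha|\cdot|^2/2}\,dv$, to replace $\phi$ by a locally averaged (hence slowly varying at unit scale) majorant before convolving. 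That is a real additional idea, not a $p$-subadditive rewrite of Minkowski. The same caveat applies to your necessity argument for $0<p<1$, which is only gestured at. I would either restrict the clean statement to $1\le p\le\infty$ or write out the lattice/sub-mean-value step explicitly for small $p$.
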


We will present a new approach to Theorem B above when $n=1$. Our proof is based
on the main theorem in \cite{CZ} and is much different and simpler than the existing
proofs in the literature. We will conclude the paper with two open problems.

\section{Some distance functions on $\cn$}

Fix any two positive parameters $\alpha$ and $\beta$ and define a function
$$d_{\alpha,\beta}:\cn\times\cn\to[0,\infty)$$
by
$$d_{\alpha,\beta}(z,w)=\inc\left|e^{\beta z\overline u}-e^{\beta w\overline u}
\right|\,\dla(u).$$
By the ``rotation-invariance'' of the Gaussian measure, we clearly have
$$d_{\alpha,\beta}(z,w)=d_{\alpha,\beta}(Uz, Uw)$$
for all $z,w\in\cn$ and all unitary transformations $U:\cn\to\cn$. The following
lemma is obvious.

\begin{lem}\label{1}
Each $d_{\alpha,\beta}$ is a distance on $\cn$, that is, it satisfies the
following three axioms:
\begin{enumerate}
\item[(a)] $d_{\alpha,\beta}(z,w)\ge0$ for all $z,w\in\cn$, and
$d_{\alpha,\beta}(z,w)=0$ iff $z=w$.
\item[(b)] $d_{\alpha,\beta}(z,w)=d_{\alpha,\beta}(w,z)$ for all $z,w\in\cn$.
\item[(c)] $d_{\alpha,\beta}(z,w)\le d_{\alpha,\beta}(z,u)+d_{\alpha,\beta}(u,w)$
for all $z,w,u\in\cn$.
\end{enumerate}
\end{lem}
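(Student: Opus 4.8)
The plan is to verify the three metric axioms directly from the integral formula
$d_{\alpha,\beta}(z,w)=\inc|e^{\beta z\overline u}-e^{\beta w\overline u}|\,\dla(u)$.
Axioms (b) and (c) are essentially free: symmetry follows because
$|e^{\beta z\overline u}-e^{\beta w\overline u}|=|e^{\beta w\overline u}-e^{\beta z\overline u}|$
pointwise in $u$, and the triangle inequality follows by inserting
$\pm e^{\beta u_0\overline u}$ inside the absolute value, applying the scalar triangle
inequality $|e^{\beta z\overline u}-e^{\beta w\overline u}|\le
|e^{\beta z\overline u}-e^{\beta u_0\overline u}|+|e^{\beta u_0\overline u}-e^{\beta w\overline u}|$
for each fixed $u$, and then integrating against the positive measure $\dla$. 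One should
also note in passing that the integral is finite for every pair $z,w$: each exponential
$e^{\beta z\overline u}$ has modulus $e^{\beta\re(z\overline u)}\le e^{\beta|z||u|}$, which is
dominated by a Gaussian $e^{-\alpha|u|^2}$ at infinity, so $e^{\beta z\overline u}\in
L^1(\cn,\dla)$ for every $z$; hence $d_{\alpha,\beta}(z,w)<\infty$ always.

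The only point requiring an argument is the ``$d_{\alpha,\beta}(z,w)=0\iff z=w$'' half of
(a). The direction $z=w\Rightarrow d_{\alpha,\beta}(z,w)=0$ is immediate. For the
converse, suppose $d_{\alpha,\beta}(z,w)=0$. Since the integrand
$u\mapsto|e^{\beta z\overline u}-e^{\beta w\overline u}|$ is continuous and nonnegative and
$\dla$ has full support, it must vanish identically, i.e.
$e^{\beta z\overline u}=e^{\beta w\overline u}$ for all $u\in\cn$. Equivalently, the entire
function $u\mapsto e^{\beta z\overline u}-e^{\beta w\overline u}$ (which is entire in
$\overline u$, or one may substitute $u\mapsto\overline u$ to make it genuinely entire in the
variable) is identically zero. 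Differentiating at $u=0$, or simply comparing the linear
terms in the power series expansions $e^{\beta z\overline u}=1+\beta z\overline u+\cdots$ and
$e^{\beta w\overline u}=1+\beta w\overline u+\cdots$, forces $\beta z\overline u=\beta w\overline u$
for all $u$, and since $\beta>0$ this gives $z=w$.

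I do not anticipate any real obstacle here; the statement is labelled ``obvious'' in the
paper, and the content is just the observation that (i) a positive-measure integral of a
nonnegative continuous function vanishes only when the function is zero, combined with
(ii) the uniqueness of coefficients in the Taylor expansion of the exponential. If one
wanted to avoid even the power-series remark, an alternative for the nondegeneracy is to
note that $\{K_u=e^{\beta\,\cdot\,\overline u}:u\in\cn\}$ spans a dense subset of the Fock
space $F^2_{\beta}$ (for a suitable normalization), so agreement of $e^{\beta z\overline u}$
and $e^{\beta w\overline u}$ for all $u$ forces the point-evaluation functionals at $z$ and
$w$ to coincide, whence $z=w$; but the elementary Taylor-coefficient argument is shorter
and self-contained.
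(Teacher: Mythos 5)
Your proof is correct and is exactly the routine verification the paper omits (the lemma is stated there without proof as ``obvious''): symmetry and the triangle inequality are pointwise facts integrated against $\dla$, and nondegeneracy follows from continuity of the integrand, full support of the Gaussian measure, and comparison of the linear Taylor coefficients of $e^{\beta z\overline u}$ and $e^{\beta w\overline u}$. Your additional check that the integral is finite for all $z,w$ is a worthwhile touch, and nothing in the argument needs repair.
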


The computation of the precise distance $d_{\alpha,\beta}(z,w)$ between $z$ and $w$ is
often difficult. But we have the following estimate when one of the two points is the origin.
Here $F(z)\sim G(z)$ means that there exist positive constants $c$ and $C$ (independent
of $z$ but dependent on other parameters) such that $cF(z)\le G(z)\le CF(z)$ for all
$z\in\cn$.

\begin{prop}\label{2}
For any fixed positive $\alpha$ and $\beta$ we have
$$d_{\alpha,\beta}(z,0)\sim\sqrt{e^{\beta^2|z|^2/(2\alpha)}-1}$$
for $z\in\cn$. In particular,
$$d_{\alpha/2,\alpha}(z,0)\sim\sqrt{e^{\alpha|z|^2}-1},\qquad z\in\cn.$$
\end{prop}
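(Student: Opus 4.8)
The plan is to compute the integral $d_{\alpha,\beta}(z,0) = \inc |e^{\beta z\overline u} - 1|\,\dla(u)$ directly and extract the claimed square-root asymptotics. By the rotation-invariance noted before Lemma~\ref{1}, we may assume $z = (r,0,\dots,0)$ with $r = |z| \ge 0$, so the integral reduces, after integrating out the last $n-1$ variables against the Gaussian, to a one-dimensional integral $\intc |e^{\beta r \overline u} - 1|\,d\lambda_\alpha(u)$ over $\C$. The first step is therefore this reduction to $n=1$.

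For the one-variable estimate I would pass to polar-type bounds on $|e^{\beta r\overline u}-1|$. The key two-sided inequality is that $|e^{\zeta}-1|$ is comparable to $\min(|\zeta|, e^{\re \zeta})$ up to constants when $\re\zeta$ ranges over $\R$ — more precisely $|e^\zeta - 1| \le e^{\re\zeta}+1$ always, and $|e^\zeta-1|\ge c\,e^{\re\zeta}$ when $\re\zeta$ is large. A cleaner route: compute $\|e^{\beta z\overline u}-1\|_{2,\alpha}^2$ exactly, since the $L^2(\dla)$-norm of exponentials is explicitly computable via the reproducing-kernel identity $\inc e^{a\overline u} e^{\overline{b\overline u}}\,\dla(u)=e^{a\overline b/\alpha}$-type formulas; expanding the square gives $\inc|e^{\beta z\overline u}-1|^2\,\dla(u) = e^{\beta^2|z|^2/\alpha} - 2\re\!\big(e^{0}\big) + 1$-type expression, which simplifies to something comparable to $e^{\beta^2|z|^2/(2\alpha)}-1$ squared... wait — one must be careful, the $L^2$ computation yields $e^{\beta^2|z|^2/\alpha}-1$ directly, not its square root. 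So the real content is: pass from the $L^1$ integral $d_{\alpha,\beta}(z,0)$ to the $L^2$ quantity $\sqrt{\inc|e^{\beta z\overline u}-1|^2\dla(u)}$, showing these are comparable. The upper bound $d_{\alpha,\beta}(z,0)\le \big(\inc|e^{\beta z\overline u}-1|^2\dla(u)\big)^{1/2}$ is immediate from Cauchy–Schwarz (the measure has mass one). The lower bound is the delicate direction, and I expect it to be the main obstacle: one needs $\inc|e^{\beta z\overline u}-1|\,\dla(u) \ge c\,\sqrt{\inc|e^{\beta z\overline u}-1|^2\dla(u)}$ uniformly in $z$, i.e. a reverse Hölder / ``$L^1$-$L^2$ equivalence'' for this particular family of functions.

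To get that lower bound I would argue as follows. When $|z|$ is bounded (say $\beta^2|z|^2/\alpha \le 1$), both sides are comparable to $|z|$ by a Taylor expansion $e^{\beta z\overline u}-1 = \beta z\overline u + O(|z|^2|u|^2)$, so the comparison is a routine compactness/continuity statement on a bounded set (and $d_{\alpha,\beta}(z,0)>0$ for $z\ne 0$ by Lemma~\ref{1}(a)). When $|z|$ is large, write $u$ in the coordinate where $z = (r,0,\dots,0)$ and integrate the radial variable: on the region where $\re(\beta r\overline u)\ge \tfrac12 \beta r |u_1|$ (a fixed-angle sector, of fixed positive Gaussian-proportion), $|e^{\beta r\overline u}-1|$ is comparable to $e^{\re(\beta r \overline u)}$, and $\inc e^{\re(\beta r\overline u)}\,\dla(u)$ over that sector is, up to constants, $e^{\beta^2 r^2/(4\alpha)} = \sqrt{e^{\beta^2 r^2/(2\alpha)}}$ — by completing the square in the Gaussian. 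This lower-bounds $d_{\alpha,\beta}(z,0)$ by a constant times $e^{\beta^2|z|^2/(4\alpha)}$, which for large $|z|$ is comparable to $\sqrt{e^{\beta^2|z|^2/(2\alpha)}-1}$; the upper bound via Cauchy–Schwarz gives $\sqrt{e^{\beta^2|z|^2/\alpha}-1}$, which is \emph{not} the claimed rate, so in fact the sharp upper bound also requires the sector decomposition rather than brute-force $L^2$. Hence the cleanest organization is: split $\cn$ into the sector $S = \{u : \re(\beta z\overline u) \ge \tfrac12|\beta z\overline u|\}$ and its complement, estimate $|e^{\beta z\overline u}-1|$ by $C e^{\re(\beta z\overline u)}$ on $S$ and by $C(1 + e^{\re(\beta z\overline u)})$ with $\re(\beta z \overline u)$ controlled on $S^c$, and in each piece complete the square in the Gaussian integral. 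Finally, combining the bounded-$|z|$ and large-$|z|$ regimes and using $\sqrt{e^t-1}\sim e^{t/2}$ for $t\ge1$ and $\sqrt{e^t-1}\sim \sqrt t$ for $t\le 1$ gives the stated equivalence; the special case $d_{\alpha/2,\alpha}(z,0)\sim\sqrt{e^{\alpha|z|^2}-1}$ is then just $\beta = \alpha$, $\alpha \rightsquigarrow \alpha/2$ substituted into $\beta^2/(2\alpha)$.
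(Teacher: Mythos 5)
Your overall strategy can be made to work, but it is substantially more elaborate than the paper's argument, and one step needs repair. The paper's proof rests on a single exact computation --- the reproducing property gives $\inc|e^{\beta z\overline u}|\,\dla(u)=e^{\beta^2|z|^2/(4\alpha)}$ --- combined with the global pointwise inequalities $|e^{\zeta}|-1\le|e^{\zeta}-1|\le|e^{\zeta}|+1$. Integrating these against $\dla$ immediately yields $e^{\beta^2|z|^2/(4\alpha)}-1\le d_{\alpha,\beta}(z,0)\le e^{\beta^2|z|^2/(4\alpha)}+1$, hence $d_{\alpha,\beta}(z,0)\sim e^{\beta^2|z|^2/(4\alpha)}\sim\sqrt{e^{\beta^2|z|^2/(2\alpha)}-1}$ for $|z|$ bounded away from $0$; near the origin the paper, like you, computes $\lim_{|z|\to0}d_{\alpha,\beta}(z,0)/|z|$ and matches it with $\sqrt{e^t-1}\sim\sqrt{t}$ for small $t$. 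So no sector decomposition, no reduction to $n=1$, and no reverse H\"older inequality are needed: the ``delicate lower bound'' you anticipate is just the reverse triangle inequality applied under the integral sign. Your sector argument does ultimately reach the same exact Gaussian integral (completing the square gives the same $e^{\beta^2|z|^2/(4\alpha)}$), but as written it contains an inaccuracy: on the sector $S=\{u:\re(\beta z\overline u)\ge\tfrac12|\beta z\overline u|\}$ the two-sided comparison $|e^{\zeta}-1|\sim e^{\re\zeta}$ fails for $u$ near $0$, where $\zeta=\beta z\overline u\to0$ and the left-hand side vanishes while the right-hand side tends to $1$; to salvage the lower bound you must further restrict to, say, $\re(\beta z\overline u)\ge1$ and absorb the remaining region into an $O(1)$ error, which is harmless for large $|z|$ but should be stated. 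You are right --- and the paper's proof implicitly agrees --- that the crude Cauchy--Schwarz/$L^2$ route gives the wrong exponent and must be abandoned; the lesson of the paper's proof is that the correct exponent comes for free from the $L^1$ integral of $|e^{\beta z\overline u}|$ itself.
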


\begin{proof}
It follows from the reproducing property in $F^2_\alpha$ that
$$\inc\left|e^{\beta z\overline u}\right|\,d\lambda_\alpha(u)
=\inc\left|e^{\alpha(\beta z/2\alpha)\overline u}\right|^2\,d\lambda_\alpha(u)
=e^{\alpha|\beta z/2\alpha|^2}=e^{\beta|z|^2/(4\alpha)}.$$
Thus we have the following estimates:
$$d_{\alpha,\beta}(z,0)\le\inc|e^{\beta z\overline u}|\,\dla(u)+1
=e^{\beta^2|z|^2/(4\alpha)}+1,\qquad z\in\cn,$$
and
$$d_{\alpha,\beta}(z,0)\ge\inc|e^{\beta z\overline u}|\,\dla(u)-1
=e^{\beta^2|z|^2/(4\alpha)}-1,\qquad z\in\cn.$$
Consequently,
$$\lim_{|z|\to\infty}\frac{d_{\alpha,\beta}(z,0)}{e^{\beta^2|z|^2/(4\alpha)}}=1,$$
and for any $r>0$ there exists a constant $C>0$ such that
$$C^{-1}e^{\beta^2|z|^2/(4\alpha)}\le d_{\alpha,\beta}(z,0)\le
Ce^{\beta^2|z|^2/(4\alpha)},\qquad |z|\ge r.$$
This clearly implies that there is another positive constant $C$ such that
$$C^{-1}\sqrt{e^{\beta^2|z|^2/(2\alpha)}-1}\le d_{\alpha,\beta}(z,0)
\le C\sqrt{e^{\beta^2|z|^2/(2\alpha)}-1},\qquad |z|\ge r.$$

On the other hand, it is clear from the unitary invariance of the Gaussian measure (or
the distance $d_{\alpha,\beta}$) that we can use the special points $z=(z_1,0,\cdots,0)$
below to obtain
\begin{equation}\label{eq2}
\lim_{|z|\to0}\frac{d_{\alpha,\beta}(z,0)}{|z|}=\lim_{|z|\to0}\inc
\frac{|e^{\beta z\overline u}-1|}{|z|}\,\dla(u)=\beta\inc|u_1|\,\dla(u).
\end{equation}
It follows that
$$d_{\alpha,\beta}(z,0)\sim\sqrt{e^{\beta^2|z|^2/(2\alpha)}-1},\qquad|z|\to0.$$
The proof of the proposition is complete when we combine this with the estimate
at the end of the previous paragraph.
\end{proof}

To simplify notation, we will write
$$d_\alpha(z,w)=d_{\alpha/2,\alpha}(z,w)$$
from this point on. This distance function arises naturally in the study of the spaces
$F^p_\alpha$. For example, we have the following result.

\begin{lem}
For any $\alpha>0$ we have
$$d_\alpha(z,w)\sim\sup\left\{|f(z)-f(w)|: f\in F^\infty_\alpha,\|f\|_{\infty,\alpha}\le1
\right\}$$
for $z,w\in\cn$.
\label{3}
\end{lem}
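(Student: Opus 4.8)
The plan is to establish the two-sided estimate by proving each inequality separately, exploiting the reproducing-kernel structure of $F^2_\alpha$ and the explicit form of $d_\alpha(z,w)=d_{\alpha/2,\alpha}(z,w)=\inc|e^{\alpha z\overline u}-e^{\alpha w\overline u}|\,d\lambda_{\alpha/2}(u)$.

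For the upper bound $\sup\{|f(z)-f(w)|:f\in F^\infty_\alpha,\|f\|_{\infty,\alpha}\le1\}\lesssim d_\alpha(z,w)$, I would use the reproducing formula in the form $F^p_\alpha=P_\alpha L^p_\alpha$. Specifically, for $f\in F^\infty_\alpha$ one has a reproducing integral representation $f(z)=\inc e^{\alpha z\overline u}g(u)\,d\lambda_{\alpha}(u)$ (or the analogous identity against $d\lambda_{\alpha/2}$ after absorbing the Gaussian weight), where $g$ is controlled by $\|f\|_{\infty,\alpha}$; more concretely, writing $f(z)=\inc e^{\alpha z\overline u}\,f(u)e^{-\alpha|u|^2}\,(\alpha/\pi)^n dv(u)$ and splitting the weight as $e^{-\alpha|u|^2}=e^{-\alpha|u|^2/2}\cdot e^{-\alpha|u|^2/2}$, the factor $f(u)e^{-\alpha|u|^2/2}$ is bounded by $\|f\|_{\infty,\alpha}$ and the remaining $e^{-\alpha|u|^2/2}\,(\alpha/\pi)^n dv(u)$ is (a constant multiple of) $d\lambda_{\alpha/2}(u)$. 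Then
$$|f(z)-f(w)|\le C\,\|f\|_{\infty,\alpha}\inc\bigl|e^{\alpha z\overline u}-e^{\alpha w\overline u}\bigr|\,d\lambda_{\alpha/2}(u)=C\,\|f\|_{\infty,\alpha}\,d_\alpha(z,w),$$
which is exactly the desired bound, uniformly over the unit ball of $F^\infty_\alpha$. (The only care needed is tracking the normalizing constants between $d\lambda_\alpha$, $d\lambda_{\alpha/2}$ and the measure $(\alpha/\pi)^n dv$; this is routine.)

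For the lower bound, the strategy is to exhibit a single competitor $f$ for each pair $(z,w)$ that nearly realizes $d_\alpha(z,w)$. The natural candidate is built from normalized reproducing kernels: for fixed $w$, the function $u\mapsto e^{\alpha z\overline u}-e^{\alpha w\overline u}$ is (up to Gaussian normalization) a difference of kernel functions, and its "dual" should be an $F^\infty_\alpha$ function testing against it. Concretely, pick the unimodular function $\varphi(u)$ with $\varphi(u)\bigl(e^{\alpha z\overline u}-e^{\alpha w\overline u}\bigr)=\bigl|e^{\alpha z\overline u}-e^{\alpha w\overline u}\bigr|$ and set $f=P_\alpha\bigl(\varphi\cdot(\text{bounded density})\bigr)$, so that $f\in F^\infty_\alpha$ with $\|f\|_{\infty,\alpha}\lesssim1$ by boundedness of $P_\alpha$ on $L^\infty_\alpha$, and by the reproducing property $f(z)-f(w)=\inc \bigl(e^{\alpha z\overline u}-e^{\alpha w\overline u}\bigr)\varphi(u)\,(\text{density})\,d\lambda_{\alpha/2}(u)=c\,d_\alpha(z,w)$. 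This yields $d_\alpha(z,w)\lesssim|f(z)-f(w)|\le\sup\{\cdots\}$, completing the equivalence.

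The main obstacle I anticipate is the lower bound: one must produce the test function $f$ with a genuinely uniform bound $\|f\|_{\infty,\alpha}\le C$ independent of $z$ and $w$, and verify that $P_\alpha$ indeed maps $L^\infty_\alpha$ boundedly into $F^\infty_\alpha$ (this is the standard fact $F^\infty_\alpha=P_\alpha L^\infty_\alpha$ with norm control, which I would quote from \cite{Zhu2}). A secondary subtlety is that the chosen $\varphi$ is only measurable, so one should check that $P_\alpha(\varphi\cdot\text{density})$ is well-defined and that swapping the evaluation at $z$ (resp.\ $w$) with the defining integral is legitimate — but since the kernels are entire and the measure is finite against bounded densities, dominated convergence handles this. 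Modulo these points the argument is short, and the two displayed inequalities together give $d_\alpha(z,w)\sim\sup\{|f(z)-f(w)|:f\in F^\infty_\alpha,\ \|f\|_{\infty,\alpha}\le1\}$.
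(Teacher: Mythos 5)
Your proposal is correct and follows essentially the same route as the paper: both directions rest on the boundedness and surjectivity of $P_\alpha:L^\infty_\alpha\to F^\infty_\alpha$ together with the $L^1$--$L^\infty$ duality pairing against the kernel difference $e^{\alpha z\overline u}-e^{\alpha w\overline u}$, the paper merely packaging the two inequalities as the single identity $\sup_{\|g\|_{\infty,\alpha}\le1}|P_\alpha g(z)-P_\alpha g(w)|=2^n d_\alpha(z,w)$ while you prove the upper bound via the reproducing formula and the lower bound via the explicit extremizer $g(u)=\varphi(u)e^{\alpha|u|^2/2}$. The constant-tracking and measurability points you flag are indeed routine and handled implicitly in the paper.
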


\begin{proof}
It is well known that the integral operator $P_\alpha$ defined in (\ref{eq1}) maps
$L^\infty_\alpha(\cn)$ boundedly onto $F^\infty_\alpha$. It follows easily that
for $f\in F^\infty_\alpha$ we have
$$\|f\|_{\infty,\alpha}\sim\inf\left\{\|g\|_{\infty,\alpha}:
f=P_\alpha g, \ g\in L^\infty_\alpha(\cn)\right\}.$$

Let $d'_\alpha(z,w)$ denote the supremum in the lemma, which is also a distance on $\cn$
(see \cite{Zhu3} for many other examples of distance functions induced by spaces of
analytic functions). Then
\begin{align*}
d'_\alpha(z,w)&\sim\sup_{\|g\|_{\infty,\alpha}\le1}|P_\alpha g(z)-P_\alpha g(w)|\\
&=\sup_{\|g\|_{\infty,\alpha}\le1}\left|\inc(e^{\alpha z\overline u}
-e^{\alpha w\overline u})g(u)\,\dla(u)\right|\\
&=\inc|e^{\alpha z\overline u}-e^{\alpha w\overline u}|e^{\alpha|z|^2/2}\,\dla(u)\\
&=2^n\inc\left|e^{\alpha z\overline u}-e^{\alpha w\overline u}\right|
\,d\lambda_{\alpha/2}(u).
\end{align*}
This proves the desired estimates.
\end{proof}

It is natural to wonder if the limit in (\ref{eq2}) can be computed at points away from
the origin. When $n=1$, it is clear that
$$\lim_{w\to z}\frac{d_\alpha(z,w)}{|z-w|}=\lim_{w\to z}\int_{\C}
\frac{|e^{\alpha z\overline u}-e^{\alpha w\overline u}|}{|z-w|}\,d\lambda_{\alpha/2}(u)
=\alpha\int_{\C}|u|\left|e^{\alpha z\overline u}\right|\,d\lambda_{\alpha/2}(u).$$
However, the limit above does NOT exist when $n>1$ and $z\not=0$. In fact, if
we choose $w=z+(t,0,\cdots,0)$, then
$$\lim_{t\to0}\inc\frac{|1-e^{\alpha(w-z)\overline u}|}{|w-z|}|e^{\alpha z\overline u}|
\,d\lambda_{\alpha/2}(u)=\alpha\inc|u_1e^{\alpha z\overline u}|
\,d\lambda_{\alpha/2}(u).$$
Other similar ``partial derivatives'' will yield the following sub-limits:
$$\alpha\inc|u_ke^{\alpha z\overline u}|\,d\lambda_{\alpha/2}(u),\qquad 1\le k\le n,$$
which clearly depend on $k$. More specifically, it follows from polar coordinates
and the one-dimensional case that
$$\inc|u_ke^{\alpha z\overline u}|\,d\lambda_{\alpha/2}(u)\sim(1+|z_k|)
e^{\alpha|z|^2/2}.$$

On the other hand, we can write
$$\frac{d_\alpha(z,w)}{|z-w|}=\inc\frac{|1-e^{\alpha(w-z)\overline u}|}
{|(w-z)\overline u|}\,\frac{|(w-z)\overline u|}{|z-w|}\,|e^{\alpha z\overline u}|
\,d\lambda_{\alpha/2}(u).$$
It follows from the Cauchy-Schwarz inequality for vectors in $\cn$ that
\begin{equation}\label{eq3}
\limsup_{w\to z}\frac{d_\alpha(z,w)}{|z-w|}\le\alpha\inc|u||e^{\alpha z\overline u}|
\,d\lambda_{\alpha/2}(u).
\end{equation}
Thus we want to determine the growth rate of the integral
$$E(z)=\inc|u|\,|e^{\alpha z\overline u}|\,d\lambda_{\alpha/2}(u)$$
as $|z|\to\infty$, which will be used several times later on.

\begin{lem}
We have
$$\frac\alpha2\inc\left|u|^2|e^{\alpha z\overline u}\right|\,d\lambda_{\alpha/2}(u)
=\left(n+\frac\alpha2|z|^2\right)e^{\alpha|z|^2/2}$$
for all $z\in\cn$.
\label{4}
\end{lem}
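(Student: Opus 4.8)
The plan is to evaluate the left-hand integral directly by completing the square in the Gaussian exponent. Since $\alpha>0$ we have $\bigl|e^{\alpha z\overline u}\bigr|=e^{\alpha\re(z\overline u)}$, and the elementary identity
$$-\frac{\alpha}{2}|u|^2+\alpha\re(z\overline u)=-\frac{\alpha}{2}|u-z|^2+\frac{\alpha}{2}|z|^2$$
lets us pull the factor $e^{\alpha|z|^2/2}$ out of the integral, reducing the problem to computing
$$\left(\frac{\alpha}{2\pi}\right)^n e^{\alpha|z|^2/2}\inc|u|^2e^{-\frac{\alpha}{2}|u-z|^2}\,dv(u).$$
A translation $u\mapsto u+z$ turns this into the integral of $|u+z|^2$ against the centered Gaussian measure $d\lambda_{\alpha/2}$; expanding $|u+z|^2=|u|^2+2\re(u\overline z)+|z|^2$ makes the cross term vanish by oddness, and we are left with the two standard moments $\inc d\lambda_{\alpha/2}(u)=1$ and $\inc|u|^2\,d\lambda_{\alpha/2}(u)=2n/\alpha$ (the latter because each coordinate contributes $\int_{\C}|u_1|^2\,d\lambda_{\alpha/2}(u_1)=2/\alpha$). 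Collecting the constants then gives $\inc|u|^2\bigl|e^{\alpha z\overline u}\bigr|\,d\lambda_{\alpha/2}(u)=(2n/\alpha+|z|^2)e^{\alpha|z|^2/2}$, which is the asserted identity after multiplying by $\alpha/2$.

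An alternative route, which also explains where the two terms come from, is to differentiate the formula
$$\inc\bigl|e^{\alpha z\overline u}\bigr|\,d\lambda_{\alpha/2}(u)=e^{\alpha|z|^2/2}$$
(established inside the proof of Proposition~\ref{2}, taking $\beta=\alpha$ and replacing $\alpha$ there by $\alpha/2$) with respect to the $2n$ real coordinates of $z$. Because $e^{\alpha\re(z\overline u)}$ factors as a product over those coordinates, the real Laplacian acts by $\Delta_z e^{\alpha\re(z\overline u)}=\alpha^2|u|^2\,e^{\alpha\re(z\overline u)}$, while on the right $\Delta_z e^{\alpha|z|^2/2}=\bigl(2n\alpha+\alpha^2|z|^2\bigr)e^{\alpha|z|^2/2}$. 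Dividing by $\alpha^2$ and multiplying by $\alpha/2$ then yields the lemma.

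Neither approach presents a genuine difficulty; the only point deserving a word of care is the interchange of $\Delta_z$ (equivalently, differentiation under the integral sign) with the integral in the second approach, which is justified since the integrand together with its first and second $z$-derivatives is dominated, locally uniformly in $z$, by an integrable function of $u$ (a Gaussian in $u$ beats any polynomial times $e^{\alpha|z|\,|u|}$ on compact $z$-sets). The first approach sidesteps even this and is purely a matter of bookkeeping with the Gaussian normalization constants, so I would present it as the main argument.
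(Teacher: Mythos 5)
Your proposal is correct, and your main argument is a genuinely different route from the paper's. The paper starts from the reproducing-kernel identity $\inc|e^{\frac\alpha2 z\overline u}|^2\,d\lambda_{\alpha/2}(u)=e^{\frac\alpha2|z|^2}$ and applies $\partial^2/\partial z_k\partial\overline z_k$ to both sides, then sums over $k$ --- this is exactly your ``alternative route,'' since $\sum_k\partial_{z_k}\partial_{\overline z_k}$ is one quarter of the real Laplacian $\Delta_z$ and $|e^{\frac\alpha2 z\overline u}|^2=|e^{\alpha z\overline u}|$. Your primary argument instead evaluates the integral directly by completing the square, translating, and expanding $|u+z|^2$ against the centered Gaussian; the constants check out ($\inc|u|^2\,d\lambda_{\alpha/2}(u)=2n/\alpha$ is right), and the cross term does vanish by symmetry. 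What the direct computation buys is that it is entirely elementary and sidesteps the interchange of differentiation and integration that the paper's (and your second) approach requires --- a point you correctly flag and justify by domination. What the paper's approach buys is brevity given that the base identity is already in hand from the proof of Proposition~\ref{2}, and the Wirtinger-derivative bookkeeping is slightly cleaner than the real-coordinate Laplacian since it produces the per-coordinate identity $(\frac\alpha2)^2\inc|u_k|^2|e^{\frac\alpha2 z\overline u}|^2\,d\lambda_{\alpha/2}(u)=\frac\alpha2(1+\frac\alpha2|z_k|^2)e^{\frac\alpha2|z|^2}$ along the way. Either proof is acceptable.
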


\begin{proof}
It follows from the reproducing property in $F^2_{\alpha/2}$ that
$$\inc\left|e^{\frac\alpha2z\overline u}\right|^2\,d\lambda_{\alpha/2}(u)
=e^{\frac\alpha2|z|^2}$$
for all $z\in\cn$. Apply $\partial^2/\partial z_k\partial\overline z_k$ to both sides of
the above identity. The result is
$$\left(\frac\alpha2\right)^2\inc|u_k|^2\left|e^{\frac\alpha2z\overline u}\right|^2
\,d\lambda_{\alpha/2}(u)=\frac\alpha2\left(1+\frac\alpha2|z_k|^2\right)
e^{\frac\alpha2|z|^2}.$$
Summing over $k$, we obtain
$$\frac\alpha2\inc|u|^2\left|e^{\alpha z\overline u}\right|\,d\lambda_{\alpha/2}(u)
=\left(n+\frac\alpha2|z|^2\right)e^{\frac\alpha2|z|^2},$$
completing the proof of the lemma.
\end{proof}

\begin{lem}
For any $\alpha>0$ we have $E(z)\sim(1+|z|)e^{\alpha|z|^2/2}$ on $\cn$.
\label{5}
\end{lem}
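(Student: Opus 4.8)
The plan is to establish the two-sided estimate $E(z)\sim(1+|z|)e^{\alpha|z|^2/2}$ by combining Lemma~\ref{4} with the Cauchy--Schwarz inequality for the upper bound, and with a direct polar-coordinate computation (or a matching lower bound via Jensen) for the lower bound. Write $d\mu(u)=|e^{\alpha z\overline u}|\,d\lambda_{\alpha/2}(u)$ for the (unnormalized) positive measure appearing in both $E(z)$ and the integral in Lemma~\ref{4}. The reproducing property in $F^2_{\alpha/2}$ gives the total mass $\mu(\C^n)=\inc|e^{\frac\alpha2 z\overline u}|^2\,d\lambda_{\alpha/2}(u)=e^{\alpha|z|^2/2}$, and Lemma~\ref{4} rewrites the second moment as $\inc|u|^2\,d\mu(u)=\frac2\alpha\bigl(n+\frac\alpha2|z|^2\bigr)e^{\alpha|z|^2/2}$.

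\emph{Upper bound.} By the Cauchy--Schwarz inequality applied to the measure $d\mu$,
\[
E(z)=\inc|u|\,d\mu(u)\le\left(\inc|u|^2\,d\mu(u)\right)^{1/2}\mu(\C^n)^{1/2}
=\left(\frac2\alpha\Bigl(n+\frac\alpha2|z|^2\Bigr)\right)^{1/2}e^{\alpha|z|^2/2}.
\]
Since $\bigl(\frac2\alpha(n+\frac\alpha2|z|^2)\bigr)^{1/2}\le c_\alpha(1+|z|)$ for a constant depending only on $n$ and $\alpha$, this gives $E(z)\lesssim(1+|z|)e^{\alpha|z|^2/2}$.

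\emph{Lower bound.} Here two regimes must be handled. For $|z|$ bounded away from infinity (say $|z|\le 1$), $E(z)$ is a continuous, strictly positive function of $z$, hence bounded below by a positive constant, while $(1+|z|)e^{\alpha|z|^2/2}$ is bounded above; so $E(z)\gtrsim(1+|z|)e^{\alpha|z|^2/2}$ there. For $|z|\ge1$ I would exploit unitary invariance to reduce to $z=(r,0,\dots,0)$ with $r=|z|$, so that $|e^{\alpha z\overline u}|=e^{\alpha r\,\re u_1}$, and then integrate in the $u_1$ variable alone: $E(z)\ge\inc|u_1|e^{\alpha r\re u_1}\,d\lambda_{\alpha/2}(u)$, which by the one-dimensional Gaussian computation equals a constant times $(1+r)e^{\alpha r^2/2}$ (the factor $1+r$ coming from $\intc|u_1|e^{\alpha r\re u_1}\,d\lambda_{\alpha/2}(u_1)$, whose growth is read off from the $n=1$ case of Lemma~\ref{4} together with Lemma~\ref{5} in dimension one, or directly by completing the square). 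Combining the two regimes yields the full lower estimate.

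The main obstacle is the lower bound for large $|z|$: the naive Cauchy--Schwarz lower bound $\inc|u|\,d\mu\ge(\inc|u|^2\,d\mu)^2/\inc|u|^3\,d\mu$ would require controlling a third moment and is wasteful, so instead one wants the clean reduction to one complex variable via unitary invariance followed by an explicit shifted-Gaussian integral. The slightly delicate point is verifying that discarding the integration over $u_2,\dots,u_n$ (each contributing a factor $1$ by the reproducing property) genuinely loses only a constant and that the one-dimensional integral $\intc|u_1|e^{\alpha r\re u_1}\,d\lambda_{\alpha/2}(u_1)$ really grows like $(1+r)e^{\alpha r^2/2}$ rather than merely $e^{\alpha r^2/2}$; this is where completing the square, $\alpha r\,\re u_1-\frac\alpha2|u_1|^2=-\frac\alpha2|u_1-r|^2+\frac\alpha2 r^2$, makes the linear-in-$r$ gain transparent, since after translating $u_1\mapsto u_1+r$ the integral becomes $e^{\alpha r^2/2}\intc|u_1+r|\,d\lambda_{\alpha/2}(u_1)\sim(1+r)e^{\alpha r^2/2}$.
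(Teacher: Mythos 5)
Your proposal is correct, and the upper bound is exactly the paper's argument: Cauchy--Schwarz against the measure $|e^{\alpha z\overline u}|\,d\lambda_{\alpha/2}(u)$, whose total mass and second moment are supplied by the reproducing property and Lemma~\ref{4}. The lower bound, however, is obtained by a genuinely different computation. The paper differentiates the identity $\inc|e^{\alpha z\overline u/2}|^2\,d\lambda_{\alpha/2}(u)=e^{\alpha r^2/2}$ (with $z=rw$, $|w|=1$) in $r$ to get the exact identity $\inc\re(w\overline u)\,|e^{\alpha z\overline u}|\,d\lambda_{\alpha/2}(u)=|z|e^{\alpha|z|^2/2}$, and then simply bounds $\re(w\overline u)\le|u|$; you instead reduce to $z=(r,0,\dots,0)$ by unitary invariance, discard $|u|\ge|u_1|$, and complete the square to translate the Gaussian, landing on $e^{\alpha r^2/2}\intc|v+r|\,d\lambda_{\alpha/2}(v)\gtrsim(1+r)e^{\alpha r^2/2}$ (the lower bound $\intc|v+r|\,d\lambda_{\alpha/2}(v)\ge r$ following from Jensen since the measure is a probability measure with mean zero). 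Both are elementary and valid; the paper's differentiation trick is shorter and needs no unitary reduction, while your shifted-Gaussian computation makes the origin of the extra factor $r$ more concrete. One caveat: your parenthetical appeal to ``Lemma~\ref{5} in dimension one'' would be circular, but you do not need it --- the completing-the-square argument you give immediately afterwards is self-contained. The treatment of the region $|z|\le1$ by continuity and strict positivity is the same in both proofs.
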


\begin{proof}
We write $z=rw$, where $r=|z|$ (so $|w|=1$). Then
$$\inc e^{\alpha rw\overline u/2}e^{\alpha r\overline wu/2}\,d\lambda_{\alpha/2}(u)
=\inc\left|e^{\alpha z\overline u/2}\right|^2\,d\lambda_{\alpha/2}(u)
=e^{\alpha|z|^2/2}=e^{\alpha r^2/2}.$$
Take the derivative with respect to $r$ on both sides. We obtain
$$\frac\alpha2\inc(\overline wu+w\overline u)\left|e^{\alpha r\overline u}\right|
\,d\lambda_{\alpha/2}(u)=\alpha re^{\alpha r^2/2},$$
or
$$\inc\re(w\overline u)\left|e^{\alpha z\overline u}\right|\,d\lambda_{\alpha/2}(u)
=|z|e^{\alpha|z|^2/2}.$$
Therefore,
$$|z|e^{\alpha|z|^2/2}\le\inc|u|\left|e^{\alpha z\overline u}\right|
\,d\lambda_{\alpha/2}(u).$$
It is clear that the integral on the right-hand side above is a strictly positive continuous
function of $z$ for $|z|\le1$, so there is a positive constant $c$ such that
$$c\,(1+|z|)e^{\alpha|z|^2/2}\le\inc|u|\left|e^{\alpha z\overline u}\right|
\,d\lambda_{\alpha/2}(u)$$
for all $z\in\cn$.

On the other hand, it follows from H\"older's inequality and Lemma~\ref{4} that
\begin{align*}
\left[\inc|u|\left|e^{\alpha z\overline u}\right|\,d\lambda_{\alpha/2}(u)\right]^2
&\le\inc|u|^2\left|e^{\alpha z\overline u}\right|\,d\lambda_{\alpha/2}(u)
\inc\left|e^{\alpha z\overline u}\right|\,d\lambda_{\alpha/2}(u)\\
&=\left(\frac{2n}\alpha+|z|^2\right)e^{\frac\alpha2|z|^2}e^{\frac\alpha2|z|^2}\\
&=\left(\frac{2n}\alpha+|z|^2\right)e^{\alpha|z|^2}.
\end{align*}
Thus there exists a positive constant $C=C_\alpha$ such that
$$\inc|u|\left|e^{\alpha z\overline u}\right|\,d\lambda_{\alpha/2}(u)
\le C(1+|z|)\,e^{\frac\alpha2|z|^2}$$
for all $z\in\C$. Combining this with the estimate in the previous paragraph,
we complete the proof of the lemma.
\end{proof}

Finally in this section we note that for any $\alpha>0$, $\beta>0$, and $p>1$, the function
$$d(z,w)=\left[\inc|e^{\beta z\overline u}-e^{\beta w\overline u}|^p
\,\dla(u)\right]^{\frac1p}$$
is also a distance on $\cn$. For $0<p<1$ the following is a distance on $\cn$:
$$d(z,w)=\inc|e^{\beta z\overline u}-e^{\beta w\overline u}|^p\,\dla(u).$$
It is not clear what these more general distances might be good for. It would be nice
to find some applications for them.

\section{Characterizations of $F^\infty_\alpha$}

In this section we prove two characterizations for the space $F^\infty_\alpha$. It is well
known that $F^\infty_\alpha$ is maximal in some sense among Banach spaces of entire
functions under the action of the Heisenberg group; see \cite{Zhu3}. Our first
characterization of $F^\infty_\alpha$ below shows that it is a Lipschitz space.

\begin{thm}
Let $\alpha>0$ and $f\in H(\cn)$. Then $f\in F^\infty_\alpha$ if and only if
there exists a positive constant $C$ such that
$$|f(z)-f(w)|\le Cd_\alpha(z,w)$$
for all $z,w\in\cn$.
\label{6}
\end{thm}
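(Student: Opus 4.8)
The plan is to prove the two implications separately, exploiting the duality between $F^\infty_\alpha$ and the reproducing kernels that is already packaged in Lemma~\ref{3}.

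For the ``only if'' direction, suppose $f\in F^\infty_\alpha$. The key point is that every $g\in F^\infty_\alpha$ with $\|g\|_{\infty,\alpha}\le 1$ contributes to the supremum defining $d'_\alpha(z,w)$ in Lemma~\ref{3}, so applying that lemma to the normalized function $f/\|f\|_{\infty,\alpha}$ immediately gives
$$|f(z)-f(w)|\le\|f\|_{\infty,\alpha}\,d'_\alpha(z,w)\le C\|f\|_{\infty,\alpha}\,d_\alpha(z,w)$$
for all $z,w\in\cn$, where $C$ is the comparability constant from Lemma~\ref{3}. This half is essentially a restatement of Lemma~\ref{3} and should require only a sentence or two.

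For the ``if'' direction, assume $|f(z)-f(w)|\le Cd_\alpha(z,w)$ for all $z,w$. Taking $w=0$ and applying Proposition~\ref{2} (the special case $d_\alpha(z,0)=d_{\alpha/2,\alpha}(z,0)\sim\sqrt{e^{\alpha|z|^2}-1}$) gives
$$|f(z)|\le|f(0)|+Cd_\alpha(z,0)\le|f(0)|+C'\sqrt{e^{\alpha|z|^2}-1}\le|f(0)|+C'e^{\alpha|z|^2/2}.$$
Multiplying by $e^{-\alpha|z|^2/2}$ shows $|f(z)|e^{-\alpha|z|^2/2}$ is bounded on $\cn$, i.e.\ $f\in F^\infty_\alpha$. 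So the nontrivial content of the theorem is really just the pointwise growth estimate $d_\alpha(z,0)\lesssim e^{\alpha|z|^2/2}$, which Proposition~\ref{2} already supplies.

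The main obstacle is more conceptual than technical: one must recognize that the distance $d_\alpha$ is genuinely two-sided adapted to $F^\infty_\alpha$ — the lower bound $d_\alpha(z,0)\gtrsim e^{\alpha|z|^2/2}-1$ is what makes the Lipschitz condition strong enough to be equivalent to membership (rather than merely necessary or merely sufficient), and the upper bound is what makes it not too strong. Both bounds come out of Proposition~\ref{2}, so once that proposition and Lemma~\ref{3} are in hand the proof is short. I would present it as: (a)$\Rightarrow$(b) via Lemma~\ref{3}, then (b)$\Rightarrow$(a) via evaluating at $w=0$ and invoking Proposition~\ref{2}. A remark worth including is that the equivalence of norms $\|f\|_{\infty,\alpha}\sim|f(0)|+\sup_{z\neq w}|f(z)-f(w)|/d_\alpha(z,w)$ also falls out of this argument, since both implications are quantitative.
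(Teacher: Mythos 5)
Your proof is correct and takes essentially the same route as the paper: (a)$\Rightarrow$(b) via Lemma~\ref{3} (the paper also records an equivalent one-line computation with the reproducing formula), and (b)$\Rightarrow$(a) by setting $w=0$ and invoking Proposition~\ref{2}. One small quibble with your closing remark: the lower bound $d_\alpha(z,0)\gtrsim e^{\alpha|z|^2}-1$ is not actually used in either implication --- only the upper bound from Proposition~\ref{2} enters (b)$\Rightarrow$(a), while (a)$\Rightarrow$(b) rests entirely on Lemma~\ref{3}.
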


\begin{proof}
The ``only if'' direction is a direct consequence of Lemma~\ref{3}. Alternatively,
for any $f\in F^\infty_\alpha$ we have
\begin{align*}
|f(z)-f(w)|&=\left|\inc(e^{\alpha z\overline u}-e^{\alpha w\overline u})f(u)
\,d\lambda_\alpha(u)\right|\\
&\le\left(\frac\alpha\pi\right)^n\inc|e^{\alpha z\overline u}-e^{\alpha w\overline u}|
|f(u)e^{-\alpha|u|^2/2}|e^{-\alpha|u|^2/2}\,dv(u)\\
&\le2^n\|f\|_{\infty,\alpha}\inc|e^{\alpha z\overline u}-e^{\alpha w\overline u}|\,
d\lambda_{\alpha/2}(u)\\
&=2^n\|f\|_{\infty,\alpha}d_\alpha(z,w)
\end{align*}
for all $f\in F^\infty_\alpha$ and $z,w\in\cn$.

On the other hand, if $f$ satisfies the Lipschitz condition, then in particular,
$|f(z)-f(0)|\le Cd_\alpha(z,0)$ for all $z\in\cn$. By Proposition~\ref{2}, there is another
positive constant $C$ such that $|f(z)-f(0)|\le C\sqrt{e^{\alpha|z|^2}-1}$ for all
$z\in\cn$, which clearly implies that the function $f(z)e^{-\alpha|z|^2/2}$ is bounded
on $\cn$. This completes the proof of the theorem.
\end{proof}

Again, to put the result above in proper perspective, we should think of it as the Fock
space version of the following well-known result for holomorphic functions $f$ on the
unit ball $\bn$: $f$ belongs to the Bloch space $\B$ if and only if there exists a constant
$C$ such that $|f(z)-f(w)|\le C\beta(z,w)$ for all $z,w\in\bn$, where $\beta(z,w)$
is the distance between $z$ and $w$ in the Bergman metric. See \cite{Zhu1}.

For a function $f\in H(\cn)$ we will write $\nabla f=(\partial_1f,\cdots,\partial_nf)$
for the holomorphic gradient of $f$. The equivalence of conditions (a), (b), and (c)
below is known, and we include a simple proof here. But condition (d) appears to be
new, interesting, and non-trivial.

\begin{thm}
Suppose $\alpha>0$ and $f$ is an entire function on $\cn$. Then the following conditions
are equivalent:
\begin{enumerate}
\item[(a)] $f\in F^\infty_\alpha$.
\item[(b)] There exists a positive constant $C$ such that
$$|\partial_kf(z)|\le C(1+|z|)e^{\alpha|z|^2/2}$$
for all $z\in\cn$ and $1\le k\le n$.
\item[(c)] There exists a positive constant $C$ such that
$$|\nabla f(z)|\le C(1+|z|)e^{\alpha|z|^2/2}$$
for all $z\in\cn$.
\item[(d)] There exists a positive constant $C$ such that
$$|Rf(z)|\le C(1+|z|^2)e^{\alpha|z|^2/2}$$
for all $z\in\cn$.
\end{enumerate}
\label{7}
\end{thm}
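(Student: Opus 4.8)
The plan is to prove the cyclic chain of implications $(a)\Rightarrow(c)\Rightarrow(b)\Rightarrow(d)\Rightarrow(a)$, or some convenient variant thereof, relying on the growth estimate for $E(z)$ from Lemma~\ref{5} and on Theorem~\ref{6} for the hard final step.

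For $(a)\Rightarrow(c)$, I would differentiate the reproducing formula $f(z)=\inc e^{\alpha z\overline u}f(u)\,d\lambda_\alpha(u)$ under the integral sign, which gives $\partial_k f(z)=\alpha\inc\overline{u_k}\,e^{\alpha z\overline u}f(u)\,d\lambda_\alpha(u)$, hence $|\nabla f(z)|\le\alpha\inc|u|\,|e^{\alpha z\overline u}|\,|f(u)|\,d\lambda_\alpha(u)$. Pulling out $\|f\|_{\infty,\alpha}$ and absorbing one Gaussian factor exactly as in the proof of Theorem~\ref{6} turns the right-hand side into a constant times $\inc|u|\,|e^{\alpha z\overline u}|\,d\lambda_{\alpha/2}(u)=E(z)$, and Lemma~\ref{5} identifies this with $(1+|z|)e^{\alpha|z|^2/2}$ up to a constant. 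The implication $(c)\Rightarrow(b)$ is trivial since $|\partial_k f(z)|\le|\nabla f(z)|$, and $(b)\Rightarrow(d)$ is equally routine: $|Rf(z)|\le\sum_k|z_k|\,|\partial_k f(z)|\le|z|\,|\nabla f(z)|\le C|z|(1+|z|)e^{\alpha|z|^2/2}\le C(1+|z|^2)e^{\alpha|z|^2/2}$ (using $(b)$ together with $(b)\Rightarrow(c)$ for the middle inequality, or just summing the $n$ terms directly).

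The main obstacle is the implication $(d)\Rightarrow(a)$: recovering membership in $F^\infty_\alpha$ from a bound on the \emph{radial} derivative alone. The cleanest route I see is to pass through Theorem~\ref{6}. For a unit vector $w$, write $g(\zeta)=f(\zeta w)$ for $\zeta\in\C$; then $g'(\zeta)=\frac1\zeta Rf(\zeta w)$ for $\zeta\ne0$, and the hypothesis $(d)$ gives $|g'(\zeta)|\le C(1/|\zeta|+|\zeta|)e^{\alpha|\zeta|^2/2}$, which combined with analyticity of $g'$ at the origin yields $|g'(\zeta)|\le C(1+|\zeta|)e^{\alpha|\zeta|^2/2}$ for all $\zeta\in\C$ with a uniform constant. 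Integrating along the segment from $0$ to $z$ (parametrized by $\zeta\mapsto\zeta w$ with $w=z/|z|$, $0\le|\zeta|\le|z|$) gives $|f(z)-f(0)|\le\int_0^{|z|}|g'(s)|\,ds\le C\int_0^{|z|}(1+s)e^{\alpha s^2/2}\,ds$. The elementary bound $\int_0^r(1+s)e^{\alpha s^2/2}\,ds\le C'(e^{\alpha r^2/2}-1)^{1/2}+C'$ (or more crudely $\le C''e^{\alpha r^2/2}$ for $r\ge 1$, handled together with boundedness on the unit ball) then shows $|f(z)-f(0)|\lesssim\sqrt{e^{\alpha|z|^2}-1}\sim d_\alpha(z,0)$; one checks $d_\alpha(z,w)\ge c\,d_\alpha(z,0)-d_\alpha(0,w)$-type obstructions don't arise since we only need the single-point estimate, exactly as in the ``if'' direction of Theorem~\ref{6}. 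Thus $f(z)e^{-\alpha|z|^2/2}$ is bounded, i.e. $f\in F^\infty_\alpha$.

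Alternatively, and perhaps more elegantly, $(d)\Rightarrow(a)$ can be done by the classical integration-of-radial-derivative trick in its homogeneous-expansion form: if $f=\sum_k f_k$ is the expansion of $f$ into homogeneous polynomials, then $Rf=\sum_k k\,f_k$, so $f-f(0)$ and $Rf$ have comparable Taylor coefficients in an averaged sense, and one can recover a pointwise bound on $f$ from a pointwise bound on $Rf$ by writing $f(z)-f(0)=\int_0^1\frac{Rf(tz)}{t}\,dt$ and estimating the integrand via $(d)$: $|Rf(tz)|/t\le C(1+t^2|z|^2)e^{\alpha t^2|z|^2/2}/t$, which is integrable near $t=0$ after noting $Rf(0)=0$ kills the singularity (more carefully, $Rf(tz)=O(t)$ as $t\to0$). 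This reproduces the same bound $|f(z)-f(0)|\lesssim\int_0^{|z|}(1+s)e^{\alpha s^2/2}\,ds$ and finishes as above. Either way, the crux is the elementary estimate $\int_0^r(1+s)e^{\alpha s^2/2}\,ds\lesssim 1+\sqrt{e^{\alpha r^2/2}}$, after which Proposition~\ref{2} and the argument of Theorem~\ref{6} close the loop. I expect the write-up to be short; the only place requiring care is the behavior of $Rf(tz)/t$ near $t=0$, which is harmless because $Rf$ vanishes to first order at the origin.
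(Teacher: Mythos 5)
Your proposal is correct and follows essentially the same route as the paper: the derivative bounds in (b)/(c) come down to the growth of $E(z)$ from Lemma~\ref{5} (the paper gets there by taking difference quotients in the Lipschitz estimate of Theorem~\ref{6}, you by differentiating the reproducing formula under the integral sign --- both are fine and land on the same integral), the implications (c)$\Rightarrow$(b)$\Rightarrow$(d) are the same Cauchy--Schwarz observation, and your second variant of (d)$\Rightarrow$(a), namely $f(z)-f(0)=\int_0^1 Rf(tz)\,dt/t$ with the singularity at $t=0$ killed because $Rf$ vanishes to first order at the origin, is exactly the paper's argument (the paper splits the integral at $t=1/2$ and replaces $1+|z|^2$ by the equivalent bound $|z|(1+|z|)$ near the origin); your primary slice argument $g(\zeta)=f(\zeta w)$ with the maximum principle on $|\zeta|\le 1$ is the same computation in different clothing. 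One detail to fix in the write-up: the inequalities $\int_0^r(1+s)e^{\alpha s^2/2}\,ds\le C'(e^{\alpha r^2/2}-1)^{1/2}+C'$ and ``$\lesssim 1+\sqrt{e^{\alpha r^2/2}}$'' are false, since already $\int_0^r se^{\alpha s^2/2}\,ds=\alpha^{-1}(e^{\alpha r^2/2}-1)$ grows like $e^{\alpha r^2/2}$, not like its square root; the correct and sufficient statement is the ``crude'' bound $\int_0^r(1+s)e^{\alpha s^2/2}\,ds\le Ce^{\alpha r^2/2}$ (equivalently $\lesssim 1+\sqrt{e^{\alpha r^2}}$), which you also state and which is all that is needed to conclude $|f(z)-f(0)|\le Ce^{\alpha|z|^2/2}$ and hence $f\in F^\infty_\alpha$.
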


\begin{proof}
It is obvious that conditions (b) and (c) are equivalent.

If $f\in F^\infty_\alpha$, then by Theorem~\ref{6}, there exists a positive
constant $C$ such that
$$|f(z)-f(w)|\le C\inc\left|e^{\alpha z\overline u}
-e^{\alpha w\overline u}\right|\,d\lambda_{\alpha/2}(u)$$
for all $z$ and $w$ in $\C$. Let $w=z+(h,0,\cdots,0)$, divide both sides of the inequality
above by $|h|$, and then let $h\to0$. The result is
$$|\partial_1f(z)|\le C\alpha\inc\left|u_1e^{\alpha z\overline u}\right|
\,d\lambda_{\alpha/2}(u).$$
It is clear that we also have
$$|\partial_kf(z)|\le C\alpha\inc\left|u_ke^{\alpha z\overline u}\right|
\,d\lambda_{\alpha/2}(u)$$
for all $1\le k\le n$. By Lemma \ref{5}, there exists another constant $C$ such that
$$|\partial_kf(z)|\le C(1+|z|)e^{\alpha|z|^2/2}$$
for all $1\le k\le n$ and $z\in\C$. Thus condition (a) implies (b).

By the Cauchy-Schwarz inequality, we have $|Rf(z)|\le|z||\nabla f(z)|$, which yields
\begin{equation}\label{eq4}
\frac{|Rf(z)|}{1+|z|^2}\le\frac{|z||\nabla f(z)|}{1+|z|^2}\le\frac{2|\nabla f(z)|}{1+|z|}
\end{equation}
for all $z\in\cn$. This shows that condition (c) implies (d).

Finally, we assume that condition (d) holds. For any $z\in\cn$ we can write
\begin{align}\label{eq5}
f(z)-f(0)&=\sum_{k=1}^nz_k\int_0^1\partial_kf(tz)\,dt=\int_0^1Rf(tz)\,\frac{dt}t\\
&=\int_0^{1/2}Rf(tz)\,\frac{dt}t+\int_{1/2}^1Rf(tz)\,\frac{dt}t.\nonumber
\end{align}
Let $I_1(z)$ and $I_2(z)$ denote the two integrals above, respectively. Then
\begin{align*}
|I_2(z)|&\le\int_{1/2}^1|Rf(tz)|\,\frac{dt}t\le2\int_{1/2}^1|Rf(tz)|\,dt\\
&\le2C\int_0^1(1+t^2|z|^2)e^{\alpha t^2|z|^2/2}\,dt\\
&\le2C\left[e^{\alpha|z|^2/2}+\int_0^{|z|}se^{\alpha s^2/2}\,ds\right]\\
&=2C\left[e^{\alpha|z|^2/2}+\frac1\alpha\left(e^{\alpha|z|^2/2}-1\right)\right]\\
&\sim e^{\alpha|z|^2/2},\qquad z\in\cn.
\end{align*}
To estimate $I_1(z)$, note that the assumption
$$|Rf(z)|\le C(1+|z|^2)e^{\alpha|z|^2/2}$$
is clearly equivalent to
$$|Rf(z)|\le C|z|(1+|z|)e^{\alpha|z|^2/2}$$
(with a possibly different constant). Thus
\begin{align*}
|I_1(z)|&\le\int_0^{1/2}|Rf(tz)|\,\frac{dt}t\\
&\le C\int_0^{1/2}|z|(1+t|z|)e^{\alpha t^2|z|^2/2}\,dt\\
&\le \frac C2|z|\left(1+\frac{|z|}2\right)e^{\alpha|z|^2/8}\\
&\le C'e^{\alpha|z|^2/2},\qquad z\in\cn
\end{align*}
for another positive constant $C'$. Combining the estimates for $I_1(z)$ and $I_2(z)$,
we find another positive constant $C$ such that
$$|f(z)-f(0)|\le Ce^{\alpha|z|^2/2}$$
for all $z\in\cn$. This shows $f\in F^\infty_\alpha$ and completes the proof
of the theorem.
\end{proof}

We warn any inexperienced reader that the factor $1-|z|^2$ on the unit ball
$\bn$ can be replaced by $1-|z|$, while $1+|z|^2$ is critically different from $1+|z|$
in Theorem~\ref{7} above!

\section{Hardy-Littlewood type theorems for Fock spaces}

It is known that Theorem \ref{7} can be extended to all Fock spaces $F^p_\alpha$
in terms of higher order derivatives.

\begin{thm}
Suppose $0<p\le\infty$, $\alpha>0$, $f\in H(\cn)$, and $N$ is a positive integer.
Then $f\in F^p_\alpha$ if and only if the functions $\partial^mf(z)/(1+|z|)^m$,
where $|m|=N$, all belong to $L^p_\alpha$.
\label{8}
\end{thm}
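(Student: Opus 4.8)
The plan is to carry out the case $n=1$, which is the setting in which the announced new approach applies; for $n\ge2$ one falls back on the existing arguments in \cite{CFB, CH, CP, CN, CP1}. In one variable the top-order condition reduces to the single requirement that $f^{(N)}(z)/(1+|z|)^N\in L^p_\alpha$, and the whole point is to show that this is equivalent to $f\in F^p_\alpha$.

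The only external input I would use is the main theorem of \cite{CZ}, which I would phrase as a ``one derivative shifts the polynomial weight by one unit'' principle: for every real $s$, every $0<p\le\infty$, every $\alpha>0$, and every entire $g$ on $\C$,
$$(1+|z|)^{s}g(z)\in L^p_\alpha\quad\Longleftrightarrow\quad(1+|z|)^{s-1}g'(z)\in L^p_\alpha,$$
with comparable quasinorms. (If \cite{CZ} is stated in terms of weighted Fock spaces carrying a real parameter, this is essentially its content; for $s=0$ and $p=\infty$ it is Theorem~\ref{7}.) The forward implication is the routine estimate of $g'(z)$ by a local average of $g$; the reverse one is an integration estimate that recovers $g$ from $g'$ via $g(z)=g(0)+\int_0^1 zg'(tz)\,dt$ and splits $\int_0^1$ at $t=1/2$, exactly as the terms $I_1(z)$ and $I_2(z)$ are controlled in the proof of Theorem~\ref{7}, with Minkowski's inequality (and its substitute for $0<p<1$) used to pull the $L^p_\alpha$-quasinorm inside the $t$-integral. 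I would cite \cite{CZ} for this rather than reprove it.

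Granting the displayed equivalence, Theorem~\ref{8} for $n=1$ is immediate: apply it successively with $s=0,-1,-2,\dots,-(N-1)$ and chain the equivalences,
$$f\in F^p_\alpha\ \Longleftrightarrow\ \frac{f'(z)}{1+|z|}\in L^p_\alpha\ \Longleftrightarrow\ \frac{f''(z)}{(1+|z|)^2}\in L^p_\alpha\ \Longleftrightarrow\ \cdots\ \Longleftrightarrow\ \frac{f^{(N)}(z)}{(1+|z|)^N}\in L^p_\alpha.$$
The step worth flagging is that every intermediate exponent $s=0,-1,\dots,-N$ actually occurs, so one genuinely needs \cite{CZ} for arbitrarily negative real $s$, not merely the membership characterization of $F^p_\alpha$ itself; this is the only place the argument could break. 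Consequently the real obstacle lies inside the imported equivalence, and precisely in its reconstruction half: because the exponents are sharp here — one cannot trade $1+|z|$ for $1+|z|^2$, as the warning following Theorem~\ref{7} stresses — the integration estimate must keep the Gaussian factor $e^{-\alpha|z|^2/2}$ and the polynomial factor $(1+|z|)^s$ strictly separate and must exploit the rapid decay of $e^{\alpha t^2|z|^2/2}$ for $t$ bounded away from $1$. In the write-up, then, the proof would consist of stating the one-derivative equivalence as a lemma attributed to \cite{CZ} and running the $N$-fold chain above.
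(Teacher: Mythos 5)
Your overall architecture --- cite the literature for $n\ge 2$, and for $n=1$ reduce everything to a one-derivative weight-shifting equivalence applied $N$ times --- matches the spirit of the paper's new approach (Theorems~\ref{9}--\ref{11}). But there is a genuine gap, and it sits exactly where you flag it and then decline to fill it. The main theorem of \cite{CZ} gives only the case $s=1$ of your principle, in the form $zg(z)\in F^p_\alpha\Longleftrightarrow g'\in F^p_\alpha$; it does not contain the statement for $s=0$, let alone for negative $s$. The passage from $s=1$ to $s=0$ is precisely Theorem~\ref{9}, and it is not a citation: the paper obtains it by substituting $f/z$ into the \cite{CZ} equivalence, which produces the unwanted term $f(z)/z^2$, and then removing that term. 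For $p>1/2$ this is easy, but for $0<p\le 1/2$ one cannot simply absorb $f/z^2$, and the paper needs the quasi-triangle inequality for the metric $\int|f_1-f_2|^p\,d\lambda_\alpha$ together with the two-sided bound from \cite{CZ} and a choice of radius $R$ with $c-R^{-2p}>0$ to close the loop. The further descent to $s=-1,-2,\dots$ is Corollary~\ref{10} and the induction of Theorem~\ref{11}, where applying the $s=0$ equivalence to $g=f^{(N)}/z^N$ works only because the free constant $c=N$ in Corollary~\ref{10} makes the error terms $\pm Nf^{(N)}(z)/z^{N+2}$ cancel identically. None of this is in \cite{CZ}, so attributing the full real-$s$ scale to that reference leaves the core of the theorem unproved.

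Separately, your sketch of the ``reconstruction half'' of the imported equivalence --- writing $g(z)=g(0)+\int_0^1 zg'(tz)\,dt$, splitting at $t=1/2$, and pulling the $L^p_\alpha$ quasinorm inside the $t$-integral by ``Minkowski's inequality and its substitute for $0<p<1$'' --- does not survive for small $p$. Minkowski's integral inequality is exactly what fails when $0<p<1$, and there is no substitute that lets you integrate the pointwise representation against $d\lambda_\alpha$ in this range; this is why the paper's argument for $0<p\le 1/2$ abandons the integral representation entirely and argues through the $p$-metric inequality described above (the integral-representation route is carried out in the paper only for $p=1$ and $p=\infty$, cf.\ Theorem~\ref{7} and the end of Section~4). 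To repair your write-up you would need to either prove the weight-shifting lemma for all $0<p\le\infty$ and all the exponents $s=0,-1,\dots,-(N-1)$ you actually use --- which amounts to reproducing Theorem~\ref{9}, Corollary~\ref{10}, and the cancellation in Theorem~\ref{11} --- or cite a reference (e.g.\ the weighted or fractional Fock--Sobolev literature \cite{CP, CN, CP1}) that genuinely contains the general-$s$ statement, at which point the argument is no longer the elementary reduction to \cite{CZ} that was advertised.
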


\begin{proof}
See \cite{CFB, CH, CP, CN, CP1}.
\end{proof}

In this section, we provide a new approach to Theorem~\ref{8} in the one-dimensional
case. This approach is based on the main theorem in \cite{CZ} and is much easier than
the arguments used in other papers in the literature. We begin with the case of first
order derivatives.

\begin{thm}
Suppose $f$ is an entire function on $\C$, $\alpha>0$, and $0<p\leq\infty$. Then
$f\in F^p_\alpha$ if and only if the function $f'(z)/(1+|z|)$ belongs to $L^p_\alpha$.
\label{9}
\end{thm}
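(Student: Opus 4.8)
The plan is to reduce the one-dimensional Hardy--Littlewood theorem for $F^p_\alpha$ to a known integrability criterion, namely the main theorem of \cite{CZ}, which (as I recall its form) characterizes membership in $F^p_\alpha$ by an averaged condition of the type ``$\widehat{f}$ or a suitable local average of $f$ over balls of fixed radius lies in $L^p_\alpha$.'' First I would treat the ``only if'' direction: if $f\in F^p_\alpha$, then $f'$ inherits good pointwise bounds from the sub-mean-value property of $|f(z)e^{-\alpha|z|^2/2}|^p$ combined with the Cauchy estimate for $f'$ on a disc of radius $1$ centred at $z$. Concretely, on $D(z,1)$ one has $|f'(z)|\le C\sup_{D(z,1)}|f|$, and $\sup_{D(z,1)}|f(\zeta)e^{-\alpha|\zeta|^2/2}|$ is, up to a constant depending only on $\alpha$, controlled by the $L^p$-average of $|f(\zeta)e^{-\alpha|\zeta|^2/2}|$ over $D(z,2)$; the Gaussian weight varies by a bounded factor on such a disc. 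This yields $|f'(z)|e^{-\alpha|z|^2/2}\lesssim \big(\int_{D(z,2)}|f(\zeta)e^{-\alpha|\zeta|^2/2}|^p\,dv(\zeta)\big)^{1/p}$, and integrating the $p$-th power in $z$ and applying Fubini (each $\zeta$ lies in boundedly many discs $D(z,2)$) shows $f'\in L^p_\alpha$; since $1+|z|\ge 1$ this already gives $f'(z)/(1+|z|)\in L^p_\alpha$, and in fact one does not even need the weight $1+|z|$ here.

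For the ``if'' direction, which is the substantive half, I would assume $g(z):=f'(z)/(1+|z|)\in L^p_\alpha$ and recover $f$ via the one-dimensional integral representation $f(z)-f(0)=\int_0^1 zf'(tz)\,dt$, exactly as in the display (\ref{eq5}) used in the proof of Theorem~\ref{7}. Splitting the integral at $t=1/2$, the tail $\int_{1/2}^1$ is handled by a pointwise bound: from $f'\in L^p_\alpha$ and the sub-mean-value/Cauchy argument run in reverse one gets $|f'(w)|\le C\|f'\|_{p,\alpha}'(1+|w|)^{?}e^{\alpha|w|^2/2}$ — here I would instead invoke the main theorem of \cite{CZ} directly to convert the hypothesis $g\in L^p_\alpha$ into an $L^p_\alpha$-bound (or a pointwise growth bound) on $f$ itself on an annular region, following the same $I_1$/$I_2$ bookkeeping as in Theorem~\ref{7} but now with $L^p$ integrals in place of the sup norm. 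The inner piece $\int_0^{1/2}$ contributes a factor $e^{\alpha|z|^2/8}$ from $e^{\alpha t^2|z|^2/2}$ with $t\le 1/2$, which is dominated by $e^{\alpha|z|^2/2}$ with room to spare, so it is harmless; the weight $1+|z|$ in the hypothesis is precisely what compensates the extra $|z|$ coming from the $z\,dt$ in the representation formula.

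The main obstacle is the ``if'' direction for $0<p<\infty$: one cannot argue pointwise as cleanly as in the $p=\infty$ case of Theorem~\ref{7}, and the naive approach of bounding $f'$ pointwise by $\|g\|_{p,\alpha}$ loses too much. The right move is to apply the theorem of \cite{CZ} as a black box that says $h\in F^p_\alpha$ iff a certain weighted derivative-type quantity of $h$ is in $L^p_\alpha$; matching our $g=f'/(1+|z|)$ to the quantity appearing there, and checking that the normalizing weights agree, is where the real work lies. A secondary technical point is the passage from the integral identity $f(z)-f(0)=\int_0^1 zf'(tz)\,dt$ to an $L^p_\alpha$-norm estimate: one needs a Minkowski-type inequality in integral form for $L^p_\alpha$ (valid for $p\ge1$) or, for $0<p<1$, the $p$-triangle inequality $\|\,\cdot\,\|_{p,\alpha}^p$ applied after discretizing the $t$-integral into dyadic pieces, together with the elementary estimate $\int_0^1(1+t|z|)^{-1}\cdot(1+|z|)\,\cdot(\text{Gaussian factors})\,dt\lesssim 1$ that makes the sum over dyadic blocks converge. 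Once these two ingredients are in place, the proof is a direct repetition of the $I_1$/$I_2$ split already displayed in Theorem~\ref{7}, now read in the $L^p_\alpha$ topology.
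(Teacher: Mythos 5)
There is a genuine gap, and it sits precisely where you locate ``the real work.'' The main theorem of \cite{CZ} that the paper relies on is not an averaged or local-mean criterion; it is the Fock--Sobolev equivalence $f'\in F^p_\alpha \Longleftrightarrow zf(z)\in F^p_\alpha$. Your ``if'' direction defers everything to ``matching $g=f'/(1+|z|)$ to the quantity appearing there,'' but with the correct statement in hand that matching \emph{is} the proof and it is not routine: one normalizes $f(0)=f'(0)=0$, applies the equivalence to $f(z)/z$ to get $f\in F^p_\alpha \Longleftrightarrow f'(z)/z-f(z)/z^2\in F^p_\alpha$, and must then remove the term $f(z)/z^2$; for $0<p\le 1/2$ this requires a quantitative $p$-triangle-inequality argument (choosing $R$ with $c-R^{-2p}>0$), not just the qualitative inclusion. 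Your alternative route via $f(z)-f(0)=\int_0^1 zf'(tz)\,dt$ with Minkowski or dyadic $p$-triangle estimates cannot close on its own: near $t=1$ the integrand forces you to control $\|zf'(z)\|_{p,\alpha}$, which is stronger by a factor $(1+|z|)^2$ than the hypothesis $f'(z)/(1+|z|)\in L^p_\alpha$. That $I_1/I_2$ bookkeeping genuinely works for $p=\infty$ (Theorem~\ref{7}) and for $p=1$ (via Fubini, as in Section 4 of the paper), but not for general $p$ --- which is exactly why the paper does not argue this way.

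The ``only if'' direction also contains a concrete error: the Gaussian weight does \emph{not} vary by a bounded factor on $D(z,1)$; the ratio $e^{-\alpha|\zeta|^2/2}\big/e^{-\alpha|z|^2/2}$ is of size $e^{\alpha|z|+\alpha/2}$ there. As written, your argument would prove $f\in F^p_\alpha\Rightarrow f'\in L^p_\alpha$ with no weight at all, which is false: differentiation is unbounded on $F^p_\alpha$ since $\|k_w'\|_{p,\alpha}/\|k_w\|_{p,\alpha}=\alpha|w|\to\infty$, and the closed graph theorem then yields an $f\in F^p_\alpha$ with $f'\notin L^p_\alpha$. The factor $1+|z|$ in the statement is exactly what you recover by running the Cauchy estimate on a disc of radius comparable to $1/(1+|z|)$ rather than radius $1$. (The paper sidesteps this entirely: its ``only if'' direction is also an immediate consequence of the \cite{CZ} equivalence together with the trivial fact that $f\in F^p_\alpha$ implies $f(z)/z^2\in F^p_\alpha$ when $f$ vanishes to high enough order at $0$.)
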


\begin{proof}
We will prove the result with the help of \cite{CZ}, where it was proved that
$f'\in F^p_\alpha$ if and only if the function $zf(z)$ is in $F^p_\alpha$. What we want
to prove here is that $f\in F^p_\alpha$ if and only if the function $[f'(z)-f'(0)]/z$ is in
$F^p_\alpha$.

Without loss of generality, we may assume that $f(0)=f'(0)=0$. In the equivalence
$$zf(z)\in F^p_\alpha\Longleftrightarrow  f'(z)\in F^p_\alpha,$$
if we replace $f(z)$ by $f(z)/z$, then
$$f(z)\in F^p_\alpha\Longleftrightarrow\frac{f'(z)}z-\frac{f(z)}{z^2}\in F^p_\alpha.$$
Since $f(z)\in F^p_\alpha$ clearly implies that $f(z)/z^2\in F^p_\alpha$, it follows that
$$f(z)\in F^p_\alpha\Longrightarrow f'(z)/z\in F^p_\alpha\Longleftrightarrow
f'(z)/(1+|z|)\in L^p_\alpha.$$

On the other hand, if $f'(z)/z\in F^p_\alpha\subset F^\infty_\alpha$, then by what we
have proved about $F^\infty_\alpha$, we must have $f\in F^\infty_\alpha$, which implies
that $f(z)/z^2\in F^p_\alpha$ when $p>1/2$. Thus $f'(z)/z\in F^p_\alpha$
implies $f'(z)/z-f(z)/z^2\in F^p_\alpha$, which yields $f\in F^p_\alpha$
for $p>1/2$.

For $0<p\le1/2$ (actually, the argument below works for $p>1/2$ as well), we write
$$\frac{f'(z)}z=\left[\frac{f'(z)}z-\frac{f(z)}{z^2}\right]+\frac{f(z)}{z^2}.$$
By the triangle inequality for the distance
$$d(f_1,f_2)=\intc|f_1(z)-f_2(z)|^p\,\dla(z)$$
in $F^p_\alpha$ (technically, we should first work with $f_r(z)=f(rz)$, $0<r<1$, and
then use an approximation argument in order to make sure that all the integrals below
all converge) and one of the main results in \cite{CZ}, there exists a positive constant
$c$ such that
\begin{align*}
\intc\left|\frac{f'(z)}{z}\right|^p\,\dla(z)&\ge
\intc\left|\frac{f'(z)}{z}-\frac{f(z)}{z^2}\right|^p\,\dla(z)-
\intc\left|\frac{f(z)}{z^2}\right|^p\,\dla(z)\\
&\ge c\intc|f(z)|^p\,\dla(z)-
\intc\left|\frac{f(z)}{z^2}\right|^p\,\dla(z).
\end{align*}
Choose a positive radius $R$ such that $c-1/R^{2p}>0$. Then
\begin{multline*}
\intc\left|\frac{f'(z)}z\right|^p\,\dla(z)
\ge\left(c-\frac1{R^{2p}}\right)\int_{|z|>R}|f(z)|^p\,\dla(z)\\
+c\int_{|z|\le R}|f(z)|^p\,\dla(z)-\int_{|z|\le R}\left|\frac{f(z)}{z^2}\right|^p\,\dla(z)
\end{multline*}
whenever $f(0)=f'(0)=0$. This shows that
$$\int_{|z|>R}|f(z)|^p\,\dla(z)<\infty$$
and hence
$$\intc|f(z)|^p\,\dla(z)<\infty$$
if the function $f'(z)/z$ belongs to $F^p_\alpha$.
\end{proof}

\begin{cor}
Suppose $\alpha>0$, $0<p\le\infty$, and $f\in H(\C)$ with $f(0)=f'(0)=0$.
Then for any constant $c$ (including $c=0$) we have $f\in F^p_\alpha$
if and only if the function
$$\frac{f'(z)}z+c\,\frac{f(z)}{z^2}$$
belongs to $F^p_\alpha$.
\label{10}
\end{cor}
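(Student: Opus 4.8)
The plan is to reduce Corollary~\ref{10} to Theorem~\ref{9} by exploiting the fact that, under the hypothesis $f(0)=f'(0)=0$, the auxiliary function $f(z)/z^2$ automatically lies in $F^p_\alpha$ whenever $f$ does, and conversely contributes nothing ``new'' to membership. First I would record the elementary observation: if $g\in H(\C)$ with $g(0)=g'(0)=0$, then $g(z)/z^2$ is again entire, and $|g(z)/z^2|\le |g(z)|$ for $|z|\ge 1$ while $g(z)/z^2$ is bounded on $|z|\le 1$; hence $g\in F^p_\alpha$ forces $g(z)/z^2\in F^p_\alpha$ for every $p\in(0,\infty]$, with no restriction $p>1/2$ needed once we use that the Taylor expansion of $g$ starts at order $2$. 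This is the point where the corollary improves on the bookkeeping inside the proof of Theorem~\ref{9}.

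Next I would prove the two directions. For the forward direction, assume $f\in F^p_\alpha$. By Theorem~\ref{9} the function $f'(z)/(1+|z|)\in L^p_\alpha$, which (since $|z|/(1+|z|)$ and $1/(1+|z|)$ are both bounded and comparable to $1$ away from the origin) is equivalent to $f'(z)/z\in F^p_\alpha$ given $f'(0)=0$. By the observation above, $f(z)/z^2\in F^p_\alpha$ as well. Since $F^p_\alpha$ is a vector space (a quasi-Banach space for $0<p<1$, a Banach space for $p\ge 1$), the linear combination $f'(z)/z+c\,f(z)/z^2$ lies in $F^p_\alpha$ for every constant $c$. For the reverse direction, suppose $h(z):=f'(z)/z+c\,f(z)/z^2\in F^p_\alpha$. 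I would split into cases on the value of $c$. If $c=0$ this is exactly the reverse implication of Theorem~\ref{9}. If $c\ne 0$, I first note $h\in F^p_\alpha\subset F^\infty_\alpha$, and I want to conclude $f\in F^\infty_\alpha$ as an intermediate step; this can be extracted from the growth estimate $|h(z)|\lesssim e^{\alpha|z|^2/2}$ together with the integral representation $f(z)=\int_0^1 (Rf)(tz)\,dt/t$ used in the proof of Theorem~\ref{7}, after expressing $Rf(z)=zf'(z)=z^2 h(z)-c\,f(z)$ and running a Gr\"onwall-type estimate along rays. Once $f\in F^\infty_\alpha$, the function $f(z)/z^2$ again lies in $F^p_\alpha$ (using the order-$2$ vanishing so that $p>1/2$ is not required), hence $f'(z)/z=h(z)-c\,f(z)/z^2\in F^p_\alpha$, and Theorem~\ref{9} gives $f\in F^p_\alpha$.

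An alternative, cleaner route for the reverse direction when $0<p\le 1/2$ would be to imitate the triangle-inequality argument at the end of the proof of Theorem~\ref{9}: write $f'(z)/z = h(z) - c\,f(z)/z^2$, apply the quasi-triangle inequality for the metric $d(f_1,f_2)=\intc|f_1-f_2|^p\,\dla$ together with the equivalence $\intc|f'(z)/z|^p\,\dla(z)\sim\intc|f(z)|^p\,\dla(z)$ from \cite{CZ}, and then absorb the tail $\int_{|z|>R}|f(z)/z^2|^p\,\dla(z)$ into the left side for $R$ large, exactly as in Theorem~\ref{9}; the interior piece $|z|\le R$ is controlled crudely. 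This avoids passing through $F^\infty_\alpha$ altogether and handles all $p$ at once, modulo the same $r\to 1$ approximation remark already flagged in the paper.

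The main obstacle I anticipate is the reverse direction when $c\neq 0$: one cannot simply invert the relation $h = f'/z + c f/z^2$ to solve for $f$ without first knowing some a priori growth control on $f$, since $h$ alone is a first-order linear ODE datum for $f$ and the homogeneous solutions ($f(z)=z^{-c}\cdot(\text{const})$, formally) are not entire unless handled carefully. The hypothesis $f\in H(\C)$ with $f(0)=f'(0)=0$ is what rules out spurious solutions, but translating ``$h\in F^p_\alpha$'' into ``$f$ has at most Fock-type growth'' is the step requiring real work — either via the ray-integration/Gr\"onwall estimate or via the triangle-inequality absorption trick above. Everything else is routine manipulation with the boundedness of $|z|^k/(1+|z|)^k$ and the vector-space structure of $F^p_\alpha$.
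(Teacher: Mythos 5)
Your forward direction and your opening observation are correct: since $f$ vanishes to order $2$ at the origin, $f/z^2$ is entire with $|f(z)/z^2|\le|f(z)|$ for $|z|\ge1$, so $f\in F^p_\alpha$ implies $f/z^2\in F^p_\alpha$ for \emph{every} $p$, and then $f'/z+c\,f/z^2\in F^p_\alpha$ by Theorem~\ref{9} and linearity. The genuine gap is in your primary route for the reverse direction when $c\ne0$: the step ``once $f\in F^\infty_\alpha$, the function $f(z)/z^2$ again lies in $F^p_\alpha$ (using the order-$2$ vanishing so that $p>1/2$ is not required)'' is false for small $p$. The order-$2$ vanishing only repairs the singularity at the origin; the obstruction is at infinity. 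From $f\in F^\infty_\alpha$ alone one gets only $|f(z)/z^2|e^{-\alpha|z|^2/2}\le C|z|^{-2}$ for $|z|\ge1$, and $\int_{|z|>1}|z|^{-2p}\,dv(z)$ is finite (in one variable) only for $p>1$. So this route proves the reverse implication only for $p>1$; to conclude $f/z^2\in F^p_\alpha$ for all $p$ you would need $f\in F^p_\alpha$ itself, which is what you are trying to prove. (The ray-integration step to reach $f\in F^\infty_\alpha$ is also more delicate than indicated when $\re c\le-2$, where the integrating factor $t^{c}$ no longer kills the boundary term at $t=0$ for free; in any case this detour through $F^\infty_\alpha$ is unnecessary.)

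Your ``alternative'' route is in fact the intended proof: the paper's proof of the corollary is literally ``this follows from Theorem~\ref{9} and its proof,'' i.e.\ the absorption argument, and that route does handle all $p$ at once. One correction to how you set it up: the equivalence $\intc|f'(z)/z|^p\,\dla(z)\sim\intc|f(z)|^p\,\dla(z)$ is not itself ``from \cite{CZ}''; what \cite{CZ} gives (applied to $f/z$) is the lower bound $\intc|f'/z-f/z^2|^p\,\dla\ge c_0\intc|f|^p\,\dla$. The clean decomposition is therefore $h=\bigl(f'/z-f/z^2\bigr)+(1+c)\,f/z^2$, after which the quasi-triangle inequality and the absorption of
$$|1+c|^p\int_{|z|>R}\left|\frac{f(z)}{z^2}\right|^p\,\dla(z)\le\frac{|1+c|^p}{R^{2p}}\int_{|z|>R}|f(z)|^p\,\dla(z)$$
for $R$ large, together with the finiteness of the integral over $|z|\le R$ (where $f/z^2$ is entire) and the $f_r$ approximation already flagged, finish the proof for all $0<p\le\infty$ and all constants $c$. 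If you present that second route as the proof and drop the first, the argument is complete and coincides with the paper's.
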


\begin{proof}
This follows from Theorem~\ref{9} and its proof.
\end{proof}

\begin{thm}
Suppose $f$ is an entire function on $\C$, $N$ is a positive integer, and $0<p\le\infty$.
Then $f\in F^p_\alpha$ if and only if the functions $f^{(N)}(z)/(1+|z|)^N$ belongs to $L^p_\alpha$.
\label{11}
\end{thm}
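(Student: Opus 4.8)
The plan is to induct on $N$, using Theorem~\ref{9} as the base case $N=1$. The key observation is that Theorem~\ref{9} is really a statement about passing between $f$ and a first-order-derivative expression, and iterating it should peel off one derivative at a time; the bookkeeping comes from the fact that $f^{(N)}(z)/(1+|z|)^N \in L^p_\alpha$ is equivalent, by the equivalence $g\in F^p_\alpha \Leftrightarrow zg(z)\in F^p_\alpha$ from \cite{CZ} applied suitably, to $f^{(N)}(z)/z^N$ lying in $F^p_\alpha$ (after arranging enough vanishing at the origin). So I would first normalize: replacing $f$ by $f$ minus its Taylor polynomial of degree $N-1$ changes nothing about membership in $F^p_\alpha$ and nothing about the growth of $f^{(N)}$, so we may assume $f(0)=f'(0)=\cdots=f^{(N-1)}(0)=0$, hence $f^{(k)}(z)/z^{N-k}$ is entire for each $0\le k\le N$.

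Next I would set up the inductive step. Suppose the theorem holds for $N-1$ and all entire functions. Apply the induction hypothesis to the entire function $g(z)=f'(z)$ (note $g$ has a zero of order $N-2$ at the origin, which is the right normalization for the case $N-1$ after discarding the degree-$(N-2)$ Taylor polynomial of $g$, which is zero here): $g\in F^p_\alpha$ if and only if $g^{(N-1)}(z)/(1+|z|)^{N-1}=f^{(N)}(z)/(1+|z|)^{N-1}\in L^p_\alpha$. But $f^{(N)}(z)/(1+|z|)^{N-1}\in L^p_\alpha$ is equivalent to $f^{(N)}(z)/(1+|z|)^N\in L^p_\alpha$ precisely when an extra factor $1+|z|$ in the denominator can be absorbed — this is where I must be careful, and it is exactly the content of Theorem~\ref{9}/Corollary~\ref{10}: namely $h\in F^p_\alpha \Leftrightarrow h'(z)/(1+|z|)\in L^p_\alpha$, equivalently (for $h$ vanishing to second order) $h'(z)/z \in F^p_\alpha \Leftrightarrow h(z)\in F^p_\alpha$, so dividing by one more power of $z$ corresponds to integrating once. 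Concretely, $f'\in F^p_\alpha \Leftrightarrow f\in F^p_\alpha$ by Theorem~\ref{9} (applied with the roles arranged so that $f'(z)/(1+|z|)\in L^p_\alpha \Leftrightarrow f\in F^p_\alpha$), and $f'\in F^p_\alpha \Leftrightarrow f^{(N)}(z)/(1+|z|)^{N-1}\in L^p_\alpha$ by the induction hypothesis applied to $f'$. Chaining these, $f\in F^p_\alpha \Leftrightarrow f^{(N)}(z)/(1+|z|)^{N-1}\in L^p_\alpha$.

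There is still a gap between the exponent $N-1$ I just obtained and the exponent $N$ in the statement, so the final step is to reconcile them. The point is that for an entire function $h$ with $h(0)=h'(0)=0$ one has $h\in F^p_\alpha \Leftrightarrow h(z)/z^2 \in F^p_\alpha$ trivially in one direction, and \cite{CZ} plus Theorem~\ref{9} in the other, and more relevantly $h'(z)/(1+|z|)^k\in L^p_\alpha \Leftrightarrow h'(z)/z^k \in F^p_\alpha$ after enough vanishing. Applying this with $h=f^{(N-1)}$: $f^{(N)}(z)/(1+|z|)^{N-1} \in L^p_\alpha$ translates to $f^{(N)}(z)/z^{N-1}\in F^p_\alpha$, and I want instead $f^{(N)}(z)/z^N \in F^p_\alpha$; these differ by one factor of $z$, i.e.\ by the equivalence $G\in F^p_\alpha \Leftrightarrow zG\in F^p_\alpha$, which is the \cite{CZ} theorem quoted in the proof of Theorem~\ref{9}. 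Unwinding, $f^{(N)}(z)/(1+|z|)^N\in L^p_\alpha$ is equivalent to $f^{(N)}(z)/z^N\in F^p_\alpha$, and I must check that the iterated application of Theorem~\ref{9} lands exactly here.

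\textbf{Main obstacle.} The delicate point is the exponent bookkeeping when $0<p\le 1$, where one cannot freely multiply or divide by powers of $z$ without controlling the behavior near the origin — exactly the issue handled in the proof of Theorem~\ref{9} via the radius $R$ and the condition $p>1/2$ versus $p\le 1/2$. I expect the clean way is: repeatedly invoke Corollary~\ref{10} (which already packages the "divide by $z$, absorb a power" step with the correct hypotheses on vanishing at $0$) and the \cite{CZ} equivalence $g\in F^p_\alpha\Leftrightarrow zg(z)\in F^p_\alpha$, applied $N$ times. The bulk of the work is verifying that after normalizing $f$ to vanish to order $N$ at the origin, each of the $N$ intermediate entire functions $f^{(k)}(z)/z^{N-k}$ is well-defined and the equivalences compose; the small-$p$ truncation argument from Theorem~\ref{9}'s proof may need to be repeated at each stage, or invoked once at the end. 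I would present the induction cleanly and point to Theorem~\ref{9} and Corollary~\ref{10} for the analytic heart of each step.
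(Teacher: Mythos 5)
Your overall strategy (induct on $N$, base case Theorem~\ref{9}, normalize by subtracting a Taylor polynomial so that powers of $z$ can replace powers of $1+|z|$) matches the paper's. But the concrete inductive chain you write down is broken at its first link. You assert ``$f'\in F^p_\alpha \Leftrightarrow f\in F^p_\alpha$ by Theorem~\ref{9}.'' That is not what Theorem~\ref{9} says and it is false: Theorem~\ref{9} gives $f\in F^p_\alpha \Leftrightarrow f'(z)/(1+|z|)\in L^p_\alpha$, and the condition $f'\in F^p_\alpha$ is strictly stronger (by the \cite{CZ} result it is equivalent to $zf(z)\in F^p_\alpha$, i.e.\ to $f$ lying in a Fock--Sobolev space, not in $F^p_\alpha$). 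Consequently the intermediate conclusion ``$f\in F^p_\alpha \Leftrightarrow f^{(N)}(z)/(1+|z|)^{N-1}\in L^p_\alpha$'' is wrong — the correct exponent is $N$, and the two conditions with exponents $N-1$ and $N$ are genuinely inequivalent, so no ``reconciliation'' step can repair the chain. Your attempted repair also misquotes \cite{CZ} as ``$G\in F^p_\alpha \Leftrightarrow zG\in F^p_\alpha$''; the actual statement is $G'\in F^p_\alpha \Leftrightarrow zG\in F^p_\alpha$, and multiplication by $z$ does not preserve membership in $F^p_\alpha$.

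The missing idea is how to pass from step $N$ to step $N+1$ without ever asserting that differentiation preserves $F^p_\alpha$. The paper does this by applying the induction hypothesis to $f$ itself (not to $f'$), obtaining $f\in F^p_\alpha \Leftrightarrow g\in F^p_\alpha$ where $g(z)=f^{(N)}(z)/z^N$, and then invoking Corollary~\ref{10} with the specific constant $c=N$: $g\in F^p_\alpha$ iff $g'(z)/z+N\,g(z)/z^2\in F^p_\alpha$. The point of choosing $c=N$ is the exact algebraic cancellation
\begin{equation*}
\frac{g'(z)}{z}+N\,\frac{g(z)}{z^2}
=\frac{f^{(N+1)}(z)}{z^{N+1}}-\frac{Nf^{(N)}(z)}{z^{N+2}}+\frac{Nf^{(N)}(z)}{z^{N+2}}
=\frac{f^{(N+1)}(z)}{z^{N+1}},
\end{equation*}
which lands precisely on the desired quantity with exponent $N+1$. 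Corollary~\ref{10} (with its arbitrary constant $c$) is exactly the device that absorbs the cross term you were trying to handle by multiplying and dividing by $z$; without it, the small-$p$ and near-origin issues you flag as the ``main obstacle'' do not resolve. So the proposal as written does not constitute a proof, though it correctly identifies Theorem~\ref{9} and Corollary~\ref{10} as the analytic ingredients.
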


\begin{proof}
We prove this by induction on $N$. The case $N=1$ has already been proved.

Suppose the result holds for some positive integer $N$. We proceed to show that
$f\in F^p_\alpha$ if and only if the function $f^{(N+1)}(z)/(1+|z|^{N+1})$ belongs to
$L^p_\alpha$. Without loss of generality. we may assume that
$$f(0)=f'(0)=\cdots=f^{(2N+2)}(0)=0.$$
In this case, we just need to show that $f\in F^p_\alpha$ if and only if the function
$f^{(N+1)}(z)/z^{N+1}$ belongs to $L^p_\alpha$.

By the induction hypothesis, we have that $f\in F^p_\alpha$ if and only if the function
$f^{(N)}(z)/(1+|z|)^N$ belongs to $L^p_\alpha$, which is the same as the function
$g(z)=f^{(N)}(z)/z^N$ belonging to $F^p_\alpha$. By Corollary~\ref{10}, this is
equivalent to the function
$$\frac{g'(z)}z+N\,\frac{g(z)}{z^2}=\frac{f^{(N+1)}(z)}{z^{N+1}}
-\frac{Nf^{(N)}(z)}{z^{N+2}}+\frac{Nf^{(N)}(z)}{z^{N+2}}
=\frac{f^{(N+1)}(z)}{z^{N+1}}$$
belonging to $F^p_\alpha$. So the desired result is true for $N+1$, and the proof of
the theorem is complete.
\end{proof}

Serious obstacles arise when we try the arguments above in higher dimensions,
although several steps still work. In particular, the ``only if part'' of
Theorem~\ref{9} in the higher dimensional case follows easily from the one-dimensional
case. In fact, by Theorem \ref{9} and the closed-graph theorem, there exists a positive
constant $C$ (independent of $f$) such that
$$\int_{\C}\left|\frac{f'(z)e^{-\alpha|z|^2/2}}{1+|z|}\right|^p\,dA(z)
\le C\int_{\C}\left|f(z)e^{-\alpha|z|^2/2}\right|^p\,dA(z)$$
for all $f\in H(\C)$, where $dA$ is ordinary area measure on $\C$.
Now if $f\in H(\cn)$ and $1\le k\le n$, then
\begin{multline*}
\int_{\C}\left|\frac{\partial_kf(z_1,\cdots,z_k,\cdots,z_n)e^{-\alpha|z_k|^2/2}}
{1+|z_k|}\right|^p\,dA(z_k)\\
\le C\int_{\C}\left|f(z_1,\cdots,z_k,\cdots,z_n)e^{-\alpha|z_k|^2/2}\right|^p\,dA(z_k),
\end{multline*}
where $C$ is independent of the $n-1$ variables $\{z_1,\cdots,z_n\}\setminus\{z_k\}$.
Since $1/(1+|z|)\le1/(1+|z_k|)$ for $1\le k\le n$, $|z|^2=|z_1|^2+\cdots+|z_n|^2$,
and the Gaussian measure on $\cn$ is a product measure, we easily deduce that
$$\inc\left|\frac{\partial_kf(z)e^{-\alpha|z|^2/2}}{1+|z|}\right|^p\,dv(z)\le
C\inc\left|f(z)e^{-\alpha|z|^2/2}\right|^p\,dv(z).$$

It then follows that there exists another positive constant $C$ such that
$$\inc\left[\frac{|\nabla f(z)|}{1+|z|}\,e^{-\alpha|z|^2}\right]^p\,dv(z)
\le C\inc\left|f(z)e^{-\alpha|z|^2/2}\right|^p\,dv(z)$$
for all $f\in H(\cn)$. Since $|Rf(z)|\le|z||\nabla f(z)|$, we can also find a
positive constant $C$ such that
$$\inc\left|\frac{Rf(z)}{1+|z|^2}\,e^{-\alpha|z|^2/2}\right|^p\,dv(z)
\le C\inc\left|f(z)e^{-\alpha|z|^2/2}\right|^p\,dv(z)$$
for all $f\in F^p_\alpha$.

When $p=1$, the other direction of the inequalities above can also be proved using
elementary arguments. In fact, it follows from (\ref{eq5}) and Fubini's theorem that
\begin{align*}
\inc|f(z)-f(0)|e^{-\beta|z|^2}\,dv(z)&\le\inc e^{-\beta|z|^2}\,dv(z)\int_0^1
\frac{|Rf(tz)|}t\,dt\\
&=\int_0^1\frac{dt}t\inc|Rf(tz)|e^{-\beta|z|^2}\,dv(z)\\
&=\int_0^1\frac{dt}{t^{2n+1}}\inc|Rf(z)|e^{-\beta|z|^2/t^2}\,dv(z)\\
&=\inc|Rf(z)|\,dv(z)\int_0^1\frac{e^{-\beta|z|^2/t^2}}{t^{2n+1}}\,dt\\
&=\frac1{2\beta^n}\inc\frac{|Rf(z)|}{|z|^{2n}}\,dv(z)\int_{\beta|z|^2}^\infty
s^{n-1}e^{-s}\,ds.
\end{align*}

An argument using mathematical induction shows that the incomplete gamma function
$$\Gamma(n,x)=\int_x^\infty s^{n-1}e^{-s}\,ds,\qquad x\in(0,\infty),$$
has the property that $\Gamma(n,x)\sim x^{n-1}e^{-x}$ as $x\to\infty$, where $n$
is any positive integer. It follows that there exists a positive constant $C$ such that
$$\inc|f(z)-f(0)|e^{-\beta|z|^2}\,dv(z)\le C\inc\frac{|Rf(z)|}{1+|z|^2}e^{-\beta|z|^2}
\,dv(z).$$
This together with (\ref{eq4}) shows that we also have
$$\inc|f(z)-f(0)|e^{-\beta|z|^2}\,dv(z)\le C\inc\frac{|\nabla f(z)|}{1+|z|}
e^{-\beta|z|^2}\,dv(z),$$
where the positive constant $C$ only depends on $n$ and $\beta$.

\section{Further remarks}

It follows from the analysis in previous sections that we have the following
results about Fock spaces in terms of the radial derivative.

\begin{cor}\label{12}
Suppose $f\in H(\cn)$ and $\alpha>0$.
\begin{enumerate}
\item[(a)] If $0<p\le\infty$ and $f\in F^p_\alpha$, then the function
$Rf(z)/(1+|z|^2)$ belongs to $L^p_\alpha$.
\item[(b)] If $p=1$ or $p=\infty$, and if the function $Rf(z)/(1+|z|^2)$ belongs to
$L^p_\alpha$, then $f\in F^p_\alpha$.
\end{enumerate}
\end{cor}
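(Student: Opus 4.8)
The plan is to assemble both parts of Corollary~\ref{12} from estimates already established earlier in the paper, so almost no new work is required.

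For part~(a) with $0<p<\infty$ I would simply invoke the chain of inequalities derived at the end of Section~4: from the one-dimensional Theorem~\ref{9} and the closed graph theorem one obtains a norm inequality $\|f'(z)/(1+|z|)\|_{p,\alpha}\le C\|f\|_{p,\alpha}$ in one variable; slicing in each coordinate and using that the Gaussian measure on $\cn$ is a product measure upgrades this to $\|\partial_kf(z)/(1+|z|)\|_{p,\alpha}\le C\|f\|_{p,\alpha}$ for $1\le k\le n$; summing over $k$ gives the same bound for $|\nabla f(z)|/(1+|z|)$; and finally the Cauchy--Schwarz inequality $|Rf(z)|\le|z||\nabla f(z)|$ together with (\ref{eq4}) yields $\|Rf(z)/(1+|z|^2)\|_{p,\alpha}\le C\|f\|_{p,\alpha}<\infty$, which is more than part~(a) asks for. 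When $p=\infty$ the integral argument above is not available, but in that case part~(a) is exactly the implication (a)$\Rightarrow$(d) of Theorem~\ref{7}, giving the pointwise bound $|Rf(z)|\le C(1+|z|^2)e^{\alpha|z|^2/2}$.

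For part~(b) with $p=\infty$, the hypothesis that $Rf(z)/(1+|z|^2)$ lies in $L^\infty_\alpha$ is precisely condition~(d) of Theorem~\ref{7}, so $f\in F^\infty_\alpha$ is the implication (d)$\Rightarrow$(a) of that theorem. For $p=1$ I would use the Fubini computation carried out at the end of Section~4 with the parameter choice $\beta=\alpha/2$, which gives
$$\inc|f(z)-f(0)|e^{-\alpha|z|^2/2}\,dv(z)\le C\inc\frac{|Rf(z)|}{1+|z|^2}\,e^{-\alpha|z|^2/2}\,dv(z).$$
Hence if $Rf(z)/(1+|z|^2)\in L^1_\alpha$ then $f(z)-f(0)\in L^1_\alpha$; since the constant function $f(0)$ is trivially in $L^1_\alpha$ (the Gaussian measure is finite), it follows that $f\in L^1_\alpha\cap H(\cn)=F^1_\alpha$.

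The points needing attention here are bookkeeping rather than real obstacles: one must match the weight in the Section~4 estimate to $\beta=\alpha/2$ so that the $F^1_\alpha$ norm actually appears, and one must peel off the constant term $f(0)$ at the end. The genuine ``obstacle'' is conceptual and explains the restriction in part~(b): recovering $f$ from its radial derivative is available only through the explicit integral identity when $p=1$ and through the pointwise estimate of Theorem~\ref{7} when $p=\infty$, and neither method extends to general $0<p<\infty$ on $\cn$, where $Rf$ and $\nabla f$ are genuinely different functions.
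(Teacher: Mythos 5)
Your proposal is correct and follows essentially the same route as the paper, which proves Corollary~\ref{12} precisely by combining Theorem~\ref{7} (the equivalence (a)$\Leftrightarrow$(d) for $p=\infty$) with the slicing/Cauchy--Schwarz chain and the Fubini--incomplete-gamma estimate at the end of Section~4 (taken with $\beta=\alpha/2$). The only details you add — summing over $k$ for $0<p<1$ and peeling off the constant $f(0)$ — are harmless and handled the same way implicitly in the paper.
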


It is therefore very natural for us to make the following conjecture.

\begin{conj}\label{13}
Suppose $0<p\le\infty$, $\alpha>0$, and $f\in H(\cn)$. Then $f\in F^p_\alpha$ if and only
if the function $Rf(z)/(1+|z|^2)$ belongs to $L^p_\alpha$. More generally, if $N$ is any
positive integer, then $f\in F^p_\alpha$ if and only if the function $R^Nf(z)/(1+|z|^2)^N$
belongs to $L^p_\alpha$.
\end{conj}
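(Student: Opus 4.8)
The plan is to reduce the $n$-dimensional conjecture to a one-variable weighted statement by restricting $f$ to complex lines, and then to prove that one-variable statement by splitting $\C$ into a compact part, handled by elementary estimates for holomorphic functions, and a tail, handled by a classical weighted Hardy inequality. Dimension one is already almost immediate from Theorem~\ref{11}: in $\C$ one has $R^Nf(z)=\sum_{j=1}^N S(N,j)\,z^jf^{(j)}(z)$, where the $S(N,j)$ are Stirling numbers of the second kind with $S(N,N)=1$; since $|z|^j/(1+|z|^2)^j\le 2^j/(1+|z|)^j$ everywhere, with comparability away from the origin, the function $R^Nf(z)/(1+|z|^2)^N$ lies in $L^p_\alpha$ exactly when $f^{(N)}(z)/(1+|z|)^N$ does, once one knows that the lower-order terms $z^jf^{(j)}(z)/(1+|z|^2)^N$ ($j<N$) belong to $L^p_\alpha$ for $f\in F^p_\alpha$, which they do by Theorem~\ref{11}. (For $0<p<1$ one uses that $L^p_\alpha$ is closed under finite sums and first passes to $f_r(z)=f(rz)$.) So only $n\ge 2$ is genuinely open.

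For $n\ge 2$, fix a unit vector $\zeta\in\cn$ and set $g_\zeta(\lambda)=f(\lambda\zeta)$, an entire function on $\C$ with $g_\zeta(0)=f(0)$. A direct computation gives $R^Nf(\lambda\zeta)=R_\lambda^N g_\zeta(\lambda)$, where $R_\lambda$ denotes the one-variable radial derivative, and $|\lambda\zeta|=|\lambda|$. Combining the rotation invariance of the surface measure $d\sigma$ on $S^{2n-1}$ with polar coordinates yields the slice-integration formula
\[
\|h\|_{p,\alpha}^p=c_n\int_{S^{2n-1}}\left(\int_{\C}|h(\lambda\zeta)|^p e^{-p\alpha|\lambda|^2/2}\,|\lambda|^{2n-2}\,dA(\lambda)\right)d\sigma(\zeta),\qquad h\in H(\cn).
\]
Writing $\|g\|_{(n)}^p:=\int_{\C}|g(\lambda)|^p e^{-p\alpha|\lambda|^2/2}|\lambda|^{2n-2}\,dA(\lambda)$, the conjecture (the nontrivial ``if'' direction; the other direction is Corollary~\ref{12}(a)) will follow once we prove: there is a constant $C=C(n,p,\alpha)$ such that for every entire $g$ on $\C$ vanishing to sufficiently high order at $0$,
\[
\|g\|_{(n)}\le C\left\|\frac{R_\lambda^N g(\lambda)}{(1+|\lambda|^2)^N}\right\|_{(n)}.
\]

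Consider first $N=1$, so $R_\lambda g(\lambda)=\lambda g'(\lambda)$, and split $\C$ at $|\lambda|=1$. On $|\lambda|\le1$ the weight $|\lambda|^{2n-2}$ degenerates, but since $g(0)=0$ we have $|g(\lambda)|\le\sup_{|\mu|\le1}|g'(\mu)|$, and by the maximum principle (on $|\mu|\le\tfrac32$) together with the sub-mean value property of $|g'|^p$ one bounds $\sup_{|\mu|\le1}|g'(\mu)|$ by a constant times $\bigl(\int_{1\le|\mu|\le2}|g'(\mu)|^p\,dA(\mu)\bigr)^{1/p}$, which is $\lesssim\|R_\lambda g/(1+|\lambda|^2)\|_{(n)}$ because on the annulus $1\le|\mu|\le2$ every remaining factor is bounded above and below; hence $\int_{|\lambda|\le1}|g|^p|\lambda|^{2n-2}\,dA\lesssim\|R_\lambda g/(1+|\lambda|^2)\|_{(n)}^p$. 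On $|\lambda|>1$ write $|g(\rho e^{i\theta})|\le|g(e^{i\theta})|+\int_1^\rho|g'(se^{i\theta})|\,ds$; since $|g'(se^{i\theta})|=|R_\lambda g(se^{i\theta})|/s\le 2s\,|R_\lambda g(se^{i\theta})|/(1+s^2)$ for $s\ge1$, after the compact-part bound for $|g(e^{i\theta})|$ the problem reduces, for each direction $\theta$, to the one-variable weighted Hardy inequality
\[
\int_1^\infty\left(\int_1^\rho\Psi(s)\,ds\right)^p\rho^{2n-1}e^{-p\alpha\rho^2/2}\,d\rho\le C\int_1^\infty\Psi(s)^p\,s^{2n-1-p}e^{-p\alpha s^2/2}\,ds .
\]
For $1<p<\infty$ the Muckenhoupt criterion for this inequality reads
\[
\sup_{r>1}\left(\int_r^\infty\rho^{2n-1}e^{-p\alpha\rho^2/2}\,d\rho\right)^{1/p}\left(\int_1^r s^{-\frac{2n-1-p}{p-1}}e^{\alpha p' s^2/2}\,ds\right)^{1/p'}<\infty\qquad(p'=\tfrac{p}{p-1}),
\]
and using $\int_r^\infty s^a e^{-bs^2}\,ds\sim(2b)^{-1}r^{a-1}e^{-br^2}$ (and the analogous asymptotics for the second factor) one finds that the two Gaussian factors cancel and the surviving powers of $r$ cancel as well, so the supremum is finite with a bound depending only on $n,p,\alpha$ — in particular independent of $\theta$. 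This settles $N=1$ for $p>1$; the exponents $0<p\le1$ are handled by the corresponding simpler (single-integral) form of the weighted Hardy inequality, after the usual dilation approximation $g_r(\lambda)=g(r\lambda)$ to keep all integrals finite.

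For $N\ge2$ one iterates the case $N=1$. The point to watch is that $R$ does not commute with multiplication by $(1+|z|^2)^{-1}$: indeed $R\bigl(h/(1+|z|^2)\bigr)=\bigl((1+|z|^2)Rh-|z|^2h\bigr)/(1+|z|^2)^2$, so the bootstrap is not formal and requires either a weighted analogue of Corollary~\ref{10} — which would in turn rest on a weighted version of the main theorem of \cite{CZ} — or a direct $N$-fold iterated weighted Hardy inequality. \textbf{I expect this step, together with the careful treatment of the exponents $0<p\le1$, to be the main obstacle.} By contrast, the case $N=1$ for all $n$ seems to be within reach of the slice-plus-Hardy argument sketched above, and it already establishes the first assertion of the conjecture.
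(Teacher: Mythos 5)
The statement you are addressing is posed in the paper as Conjecture~\ref{13}: the authors prove only the ``only if'' direction (via Theorem~\ref{8}) and the full equivalence for $p=1$ and $p=\infty$ (Corollary~\ref{12}), and explicitly leave the rest open, so there is no proof in the paper to compare yours against. Your slicing idea is genuinely promising for the case $N=1$: the formula $\|h\|_{p,\alpha}^p=c_n\int_{S^{2n-1}}\int_{\C}|h(\lambda\zeta)|^pe^{-p\alpha|\lambda|^2/2}|\lambda|^{2n-2}\,dA(\lambda)\,d\sigma(\zeta)$ is correct, $Rf(\lambda\zeta)=\lambda\frac{d}{d\lambda}f(\lambda\zeta)$ holds, and I checked your Muckenhoupt computation: with $u(\rho)=\rho^{2n-1}e^{-p\alpha\rho^2/2}$ and $v(s)=s^{2n-1-p}e^{-p\alpha s^2/2}$ the Gaussian factors and the residual powers of $r$ do cancel, so for $1<p<\infty$ the weighted Hardy inequality holds with a constant depending only on $n$, $p$, $\alpha$. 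Together with your treatment of the compact part (which is sound: maximum principle plus the sub-mean-value property on the annulus $1\le|\mu|\le2$), this would settle the first assertion of the conjecture for $1\le p\le\infty$, which is new relative to the paper and worth writing up carefully.

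However, there are two genuine gaps. First, for $0<p<1$ the weighted Hardy inequality $\int\bigl(\int_1^\rho\Psi\bigr)^pu\le C\int\Psi^pv$ is false for general $\Psi\ge0$: taking $\Psi=\epsilon^{-1}\chi_{(a,a+\epsilon)}$ makes the right-hand side $\epsilon^{1-p}v(a)\to0$ while the left-hand side stays bounded below by $\int_{a+\epsilon}^\infty u$. So the ``corresponding simpler (single-integral) form'' you invoke does not exist; to rescue $p<1$ you must exploit the holomorphy of $g'$ in an essential way (for instance, replace $|g'(se^{i\theta})|$ by an area average over a disc of fixed radius via the sub-mean-value property of $|g'|^p$ before applying Hardy), and that is a substantive additional step, not a remark. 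Second, as you yourself concede, the iteration to $N\ge2$ does not go through because $R$ fails to commute with multiplication by $(1+|z|^2)^{-1}$; note in addition that your one-dimensional reduction of $R^Nf=\sum_jS(N,j)z^jf^{(j)}$ to Theorem~\ref{11} is circular in the ``if'' direction for $N\ge2$, since controlling the lower-order terms $z^jf^{(j)}(z)/(1+|z|^2)^N$ already presupposes $f\in F^p_\alpha$. So what you have is a plausible proof of the $N=1$, $1\le p\le\infty$ case and an honest identification of the remaining obstacles, not a proof of the conjecture.
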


The ``only if'' parts above follows from Theorem~\ref{8} and the expression of $R^Nf$
in terms of partial derivatives. For example, if $N=2$, we have
\begin{align*}
R^2f&=R(z_1\partial_1f+\cdots+z_n\partial_nf)\\
&=\sum_{k=1}^nR(z_k\partial_kf)=\sum_{k=1}^n\sum_{j=1}^n
z_j\partial_j(z_k\partial_kf)\\
&=\sum_{k=1}^n\left[z_k\partial_kf+\sum_{j=1}^nz_jz_k
\frac{\partial^2f}{\partial z_j\partial z_k}\right]\\
&=Rf+\sum_{j,k=1}^nz_jz_k\frac{\partial^2f}{\partial z_j\partial z_k}.
\end{align*}
Similar formulas can be obtained for $R^Nf$ when $N$ is any positive integer.

It is clear from the previous sections that, for each $\alpha>0$, the distance function
$d_\alpha(z,w)$ plays a signficant role in the study of the Fock spaces $F^p_\alpha$.
However, we have very limited information about these distance functions.

Proposition~\ref{2} gives a good estimate for $d_\alpha(0,z)$. A natural question is
whether or not we can use the estimate in Proposition~\ref{2} together with Weyl
unitary operators (see \cite{Zhu2}) to obtain optimal estimates for the distance
function $d_\alpha(z,w)$. Our attempts so far have been unsuccessful.

Recall that Bergman spaces $A^p$ can be characterized by Lipschitz type conditions
$$|f(z)-f(w)|\le\beta(z,w)\left[g(z)+g(w)\right],$$
where $g\in L^p(\bn, dv)$ and $\beta(z,w)$ is the Bergman distance between
$z$ and $w$. See \cite{WZ}. It is natural to ask whether or not something similar is also
true for Fock spaces. We make the following conjecture here.

\begin{conj}\label{14}
Suppose $\alpha>0$, $0<p\le\infty$, and $f$ is an entire function on $\cn$. Then
$f\in F^p_\alpha$ if and only if there exists a non-negative
continuous function $g\in L^p(\cn, dv)$ such that
\begin{equation}
|f(z)-f(w)|\le d_\alpha(z,w)\left[g(z)+g(w)\right]
\label{eq6}
\end{equation}
for all $z,w\in\cn$.
\end{conj}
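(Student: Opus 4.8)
The plan is to prove the two implications by completely different means: the sufficiency (``if'') direction follows from machinery already developed in the paper, while the necessity (``only if'') direction requires global two‑sided estimates for $d_\alpha(z,w)$ that are currently unavailable, and that is where the real difficulty lies.

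For the \emph{sufficiency} direction, suppose (\ref{eq6}) holds with $g\in L^p(\cn,dv)$ nonnegative and continuous. Fix $z$ and set $w=z+(t,0,\cdots,0)$. Dividing (\ref{eq6}) by $|t|$ and letting $t\to0$, the continuity of $g$ together with the directional limit of $d_\alpha(z,w)/|z-w|$ computed in Section~2 yields
$$|\partial_1 f(z)|\le 2\alpha\,g(z)\inc|u_1 e^{\alpha z\overline u}|\,d\lambda_{\alpha/2}(u),$$
and similarly for each $\partial_k f$. Since $\inc|u_k e^{\alpha z\overline u}|\,d\lambda_{\alpha/2}(u)\sim(1+|z_k|)e^{\alpha|z|^2/2}\le(1+|z|)e^{\alpha|z|^2/2}$ (the estimate recorded in Section~2, of the same nature as Lemma~\ref{5}), we obtain $|\nabla f(z)|e^{-\alpha|z|^2/2}/(1+|z|)\lesssim g(z)$. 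Hence $\partial_k f/(1+|z|)\in L^p_\alpha$ for every $k$, and Theorem~\ref{8} with $N=1$ gives $f\in F^p_\alpha$. This direction is thus complete with existing results.

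For the \emph{necessity} direction I would try the extremal choice $g(z)=\sup_{w\neq z}|f(z)-f(w)|/d_\alpha(z,w)$, which makes (\ref{eq6}) automatic (with $g(z)$ alone), so the whole problem reduces to showing $g\in L^p(\cn,dv)$; continuity can be arranged afterward by passing to a continuous majorant of comparable $L^p$ norm. When $p=\infty$ this is exactly Theorem~\ref{6}. For $p<\infty$ I would split the supremum according to whether $|z-w|\le c/(1+|z|)$ (the scale on which $e^{\alpha|\cdot|^2/2}$ stays comparable to its value at $z$) or $|z-w|>c/(1+|z|)$. On the near part, integrating the gradient along the segment gives $|f(z)-f(w)|\le|z-w|\sup_{[z,w]}|\nabla f|$, and, \emph{granting} a two-sided near estimate $d_\alpha(z,w)\sim(1+|z|)e^{\alpha|z|^2/2}|z-w|$, the quotient is dominated by a local average of $h(z)=|\nabla f(z)|e^{-\alpha|z|^2/2}/(1+|z|)$ over a ball of radius $\sim1/(1+|z|)$; by Theorem~\ref{8} we have $h\in L^p$, and subharmonicity of $|\partial_k f|^p$ controls this local average in $L^p$ even for $p\le1$. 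On the far part one has the crude bound $|f(z)-f(w)|\le|f(z)|+|f(w)|\le e^{\alpha|z|^2/2}(g_0(z)+g_0(w))$ with $g_0(z)=|f(z)|e^{-\alpha|z|^2/2}\in L^p$ (taking $|z|\ge|w|$, by symmetry), so it would suffice to know that $d_\alpha(z,w)\gtrsim e^{\alpha\max(|z|,|w|)^2/2}$ whenever $|z-w|>c/(1+|z|)$.

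The hard part is precisely these two global estimates for $d_\alpha$, which is exactly the information the authors flag as unavailable. The near estimate $d_\alpha(z,w)\sim(1+|z|)e^{\alpha|z|^2/2}|z-w|$ is currently known only infinitesimally (Section~2 and Lemma~\ref{5}) and not as a genuine two-sided bound at finite scale; the far lower bound is suggested by testing with normalized kernels through Lemma~\ref{3} (for instance $d_\alpha(z,w)\gtrsim|k_z(z)-k_z(w)|$), but the resulting bounds degenerate for some configurations of $z$ and $w$ and do not cover the whole far region. A complete proof would therefore need genuinely new two-sided estimates for $d_\alpha(z,w)$ at arbitrary pairs, most plausibly via an analysis of the geodesics of $(\cn,d_\alpha)$ so that the near estimate can be chained across the far region. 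This geometric understanding of $d_\alpha$ is the central missing ingredient, and I expect it, rather than the $L^p$ bookkeeping, to be the decisive obstacle.
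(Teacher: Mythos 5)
Your treatment matches the paper's own handling of this statement: it is left as a conjecture there, and the authors likewise establish only the sufficiency direction, by dividing (\ref{eq6}) by $|z-w|$, letting $w\to z$, invoking the infinitesimal estimates for $d_\alpha$ from Section~2 (namely (\ref{eq3}) and Lemma~\ref{5}) to obtain $|\partial_k f(z)|\le C(1+|z|)e^{\alpha|z|^2/2}g(z)$, and then applying Theorem~\ref{8} with $N=1$ --- exactly your argument up to the cosmetic difference of using coordinate-directional limits rather than the gradient bound. Like you, they leave the necessity direction open and identify the lack of global two-sided estimates for $d_\alpha(z,w)$ as precisely the missing ingredient, deferring it to future work.
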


If $f$ satisfies the Lipschitz type condition in (\ref{eq6}), then
$$\frac{|f(z)-f(w)|}{|z-w|}\le \frac{d_\alpha(z,w)}{|z-w|}\left[g(z)+g(w)\right]$$
for all $z\not=w$ in $\cn$. Fix $z$, let $w\to z$, and use (\ref{eq3}) and Lemma~\ref{5}.
We obtain a positive constant $C$ such that
$$|\partial_kf(z)|\le C(1+|z|)e^{\alpha|z|^2/2}g(z),\qquad 1\le k\le n,z\in\cn.$$
It follows that the functions $\partial_kf(z)/(1+|z|)$, $1\le k\le n$, all belong
to $L^p_\alpha$. By Theorem \ref{8}, we have $f\in F^p_\alpha$.

To prove the other direction, it seems that we need more detailed information
and more properties of the distance function $d_\alpha(z,w)$, which are not available
at this point. We intend to pursue these issues in a future paper.

\end{document}